\numberwithin{equation}{section}
\def\epsilon{\varepsilon}
\def\de{\delta}
\def\si{\sigma}
\def\ga{\gamma}
\def\ti{\tilde}
\def\ga{\gamma}
\def\beq{\begin{eqnarray}}
  \def\eeq{\end{eqnarray}}
\def\be{\beta}
\def\al{\alpha}
\def\ep{\epsilon}
\def\partt{\frac{\partial }{\partial t} }
\def\phi{\varphi}
\def\R{\mathbb R}
\def\inj{{\rm inj}}
\def\boundary{\partial}
\def\N{\mathbb N}
\def\part{\partial}
\def\curlR{\mathcal R}
\DeclareMathOperator{\vol}{vol}
\DeclareMathOperator{\Riem}{Riem}
\DeclareMathOperator{\Ric}{Ricci}
\DeclareMathOperator{\Ricci}{Ricci}
\DeclareMathOperator{\dist}{dist}
\def\counterword#1{%
  \ifthenelse{\ref{#1}=1}{one}{}%
  \ifthenelse{\ref{#1}=2}{two}{}%
  \ifthenelse{\ref{#1}=3}{three}{}%
  \ifthenelse{\ref{#1}=4}{four}{}%
  \ifthenelse{\ref{#1}=5}{five}{}%
  \ifthenelse{\ref{#1}=6}{sic}{}%
}
\newtheorem{theorem}{Theorem}[section]
\newtheorem{lemma}[theorem]{Lemma}
\theoremstyle{definition}
\theoremstyle{remark}
\newtheorem{remark}[theorem]{Remark}
\newtheorem{notation}[theorem]{Notation}
\begin{document}
\title[Ricci flow of regions with $\curlR \geq -1$ in dimension three
] {Ricci flow of regions with curvature bounded below in dimension
three}

\thanks{We would like to thank the referees for their comments, and Peter Topping for conversations
 on an earlier version of this paper. We hope that the
 changes we made as a result of these comments/conversations have made the paper more readable and easier to understand.}

%

\author{Miles Simon}
\address{Miles Simon:
Institut f\"ur Analysis und Numerik (IAN), Universit\"at Magdeburg, Universit\"atsplatz 2, 39106 Magdeburg, Germany}

\curraddr{}
\email{ msimon at ovgu point de}

\subjclass[2000]{53C44, 35B65}

\dedicatory{}

\keywords{Ricci flow, Geometric evolution equations, Local Results, Smoothing properties}

\begin{abstract}
We consider smooth complete solutions to Ricci flow with bounded
curvature on manifolds without boundary in
dimension three. Assuming an open
ball at time zero of radius one has sectional curvature bounded from below by -1, then
we prove estimates which show that compactly contained  subregions of
this ball will be smoothed out by the Ricci flow for a short but well
defined time interval. 
The estimates we obtain depend only on the initial volume of the ball and
the  distance from the compact region to the boundary of the initial
ball. Versions of these estimates for balls of radius r  follow using
scaling arguments.
\end{abstract}

\maketitle
\section{Introduction}
In this paper we consider 
smooth solutions $(M,g(t))_{t \in
[0,T) }$ to Ricci flow $$\partt g = -2 \Ricci(g)$$ as introduced and
first studied in R.Hamilton's paper \cite{HaThree}.
The solutions $(M,g(t))_{t \in
[0,T) }$ we consider are smooth  (in space and time),
connected, complete for all $t \in [0,T)$,
 and  $M$ has no  boundary.
We usually assume that the solution $(M,g(t))_{t\in[0,T)}$ {\it has
bounded curvature}, that is that 
 $\sup_{M \times [0,T)} |\Riem(x,t)|
< \infty$. The  value  $k_0 := \sup_{M\times[0,T)} |\Riem(x,t)| <
\infty$ will play no role in the estimates we obtain.

In the paper \cite{Per}, G. Perelman proved a Pseudolocality Theorem for
solutions of the type
described  above:  if a ball
${{}^0 B}_r(p_0) $ of radius $r>0$ in an $n$-dimensional manifold $(M^n,g(0))$ at time zero is {\it almost
  Euclidean} (see Section 10 in \cite{Per}), and $(M^n,g(t))_{t \in
[0,T) }$ is a complete solution to the Ricci flow with bounded curvature, then 
for small times $  t \in [0,\ep^2(n)r^2))$,  we have estimates on how the
curvature behaves on balls  $ {}^t B_{\ep(n) r}(p_0)$. There are a number
of versions of this theorem: see the introduction in the paper
\cite{SimSmoo} for references and further remarks.
In the paper \cite{SimSmoo} we generalised this result in the  two
dimensional setting. In particular we allow regions at time zero which
are not necessarily almost Euclidean: see Theorem 1.1 in \cite{SimSmoo} and the remarks before and after the
statement of  Theorem 1.1 there.
The purpose of this paper is to generalise this result to the three
dimensional setting. 
\begin{notation}
In this paper, $\curlR(g)$
always refers to curvature operator.
When we write $\curlR(g) \geq c$ for a constant $c \in \R$, then we mean that\\
$ \Riem(g)^{ik jl}\omega_{ik}\omega_{jl} \geq
cg^{ij}g^{kl}\omega_{ik}\omega_{jl}$ on $M$ for all two forms $\omega
= \omega_{ij}dx^i \otimes dx^j$, $\omega_{ij} = -\omega_{ji}$, where
$\Riem^{ijkl}$ is the full Riemannian curvature tensor.
A two form $\omega$ has {\it length one}, if $|\omega|_g^2:=
g^{ij}g^{kl}\omega_{ik}\omega_{jl} =1$.
\end{notation}
We show the following in this paper.

\begin{theorem}\label{threed}
Let $ r, v_0>0  $   and $0<\al<1$ be given.  
Let $(M^3,(g(t)) _{t \in [0,T)}$ be a smooth complete solution to
Ricci flow with bounded curvature and no boundary, and let $p_0 \in M$ be a point such that
\begin{itemize}
\item $\vol({{}^{0} B}_{r}(p_0)) \geq v_0r^3$ and
\item $\curlR(g(0)) \geq -  \frac {1 }{ r^2}$ on  ${}^{0}
  B_{r}(p_0)$.
\end{itemize} 
Then there exists an $N = N(v_0,\al) $ and  a $\ti v_0 = \ti v_0
(v_0) >0$ 
such that
\begin{itemize}
\item[(a)] $\vol({}^{t} B_{r}(p_0)) \geq  \ti v_0 r^3$
\item[(b)] $\curlR(g(t)) \geq -  \frac {N^2} {r^2} $ on  $ {}^{t}
  B_{r(1-\al)}(p_0)$
\item[(c)] $|\Riem| \leq \frac{N^2}{ t}$ on ${{}^tB}_{r(1-\al)}(p_0)$
\end{itemize}
as long as $t \leq \frac{r^2}{N^2}$  and $t \in [0,T)$.
\end{theorem}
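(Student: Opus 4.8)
The plan is to reduce to $r=1$ (the general case follows from the parabolic rescaling $g(t)\mapsto r^{-2}g(r^2t)$, which preserves the Ricci flow equation and sends every hypothesis and conclusion to its scale-invariant form) and then to establish all three conclusions simultaneously by a continuity/bootstrap argument in time. Fix constants $N,\ti v_0,\kappa$, to be chosen depending only on $v_0$ and $\si$, and let $\tau\in[0,N^{-2}]\cap[0,T)$ be the supremum of those $t$ for which, on $[0,t]$, one has (i) $\vol({}^{s}B_{1}(x_0))\ge\ti v_0$, (ii) $\curlR(g(s))\ge -N^2$ on ${}^{s}B_{1-\si/2}(x_0)$, (iii) $|\Riem|\le N^2/s$ on ${}^{s}B_{1-\si/2}(x_0)$, and — as an auxiliary estimate which is not in the statement but is needed in the proof — (iv) a genuine non-collapsing $\vol({}^{s}B_{\rho}(x))\ge\kappa\rho^3$ for all $x\in{}^{s}B_{1-\si/2}(x_0)$ and $\rho\le\si/4$. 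It suffices to improve each of (i)--(iv) strictly on $[0,\tau]$ (and with ${}^{s}B_{1-\si}$ in place of ${}^{s}B_{1-\si/2}$ for (ii) and (iii)), since then $\tau$ cannot be an interior supremum and must equal $\min\{N^{-2},T\}$. A point worth stressing at the outset is that the curvature bound (iii) available up to $\tau$ controls the mutual distortion of distances and of the metrics $g(s)$ on the relevant scales; this is exactly what makes the coupled scheme internally consistent.

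The scalar curvature bound (ii) is the most robust ingredient: from $\partt\curlR=\lap\curlR+2|\Ric|^2\ge\lap\curlR+\tfrac23\curlR^2$ in dimension three, together with a spatial cutoff supported in ${}^{s}B_{1-\si/4}(x_0)$ whose motion is controlled by the curvature bound already available on $[0,\tau]$, a localized maximum principle yields $\curlR(g(s))\ge -C(\si)$ on ${}^{s}B_{1-\si}(x_0)$ for $s\le\tau$; for $N$ large this is strictly better than $-N^2$. The only subtlety here is that the initial data bounds $\curlR$ but \emph{not} the full curvature operator from below, so the cutoff computation must be carried out purely in terms of $\curlR$, and the nonnegative term $2|\Ric|^2$ is what makes this possible. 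The three-dimensional Hamilton--Ivey pinching estimate is not used directly at finite times; it enters only in the blow-up limit below, where it is applied to a complete ancient solution.

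For the volume the essential observation is that a lower bound of the form $g(s)\ge c(s)g(0)$ necessarily degenerates as $s\downto 0$, so the volume cannot be recovered from metric comparison alone; instead I would invoke Perelman's local no-local-collapsing theorem. On $[0,\tau]$ one already has $|\Riem|\le N^2/s$ on ${}^{s}B_{1-\si/2}(x_0)$, which after a brief wait gives a genuine curvature bound on parabolic balls of a definite, scale-invariant size around any $x\in{}^{s}B_{1-\si}(x_0)$; fed, together with the initial bounds $\vol({}^{0}B_{1}(x_0))\ge v_0$ and $\curlR(g(0))\ge -1$, into Perelman's argument, this produces $\vol({}^{s}B_{\rho}(x))\ge\kappa(v_0,\si)\rho^3$ with $\kappa$ strictly larger than the bootstrap value, and in particular $\vol({}^{s}B_{1}(x_0))\ge\ti v_0(v_0)$; here one must keep the base points a definite distance from $\boundary\,{}^{0}B_{1}(x_0)$.

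The heart of the matter, and the step I expect to be the main obstacle, is improving (iii), i.e. fixing $N=N(v_0,\si)$ so that $|\Riem|\le N^2/s$ holds on ${}^{s}B_{1-\si}(x_0)$. I would argue by contradiction and compactness: if no such $N$ worked there would be $N_j\to\infty$, solutions $(M_j^3,g_j(t))$ meeting the hypotheses with $r=1$, and first times $\tau_j\le N_j^{-2}$ at which $|\Riem|_{g_j}(p_j,\tau_j)=Q_j:=N_j^2/\tau_j$ for some $p_j\in{}^{\tau_j}B_{1-\si}(x_0^{(j)})$, where $Q_j\ge N_j^4\to\infty$. Parabolically rescaling by $Q_j$ and translating the time $\tau_j$ to $0$, the rescaled flows satisfy $|\Riem|\le 1+o(1)$ on each fixed backward time interval, exist for rescaled times $\ge -N_j^2$ (hence ancient in the limit), have $\curlR\ge -C(\si)/Q_j\to 0$, carry the pointwise lower curvature bound inherited from the not-yet-violated estimate (which rescales to $\ge -1+o(1)$), are $\kappa$-noncollapsed at all scales near $p_j$ by the previous steps, and have rescaled initial ball of radius $\sqrt{Q_j}(1-\si)\to\infty$. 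Using Shi's local derivative estimates and Hamilton's compactness theorem one extracts a pointed smooth limit: a complete ancient $\kappa$-noncollapsed Ricci flow on a $3$-manifold with bounded curvature and $|\Riem|(p_\infty,0)=1$. By the Hamilton--Ivey pinching estimate for complete ancient bounded-curvature solutions in dimension three this limit has nonnegative curvature operator, so it is a $\kappa$-solution; but (iv), rescaled, gives $\vol({}^{0}B_{\rho}(p_\infty))\ge\kappa\rho^3$ for every $\rho>0$, so the limit is noncompact with positive asymptotic volume ratio, and by Perelman the only such $\kappa$-solution is flat $\R^3$ — contradicting $|\Riem|(p_\infty,0)=1$. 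This contradiction yields (iii), and with it the theorem. The delicate features are the mutual dependence of (i)--(iv) in the bootstrap and the need to run the localized maximum principle and the local no-local-collapsing theorem on a region with boundary rather than on a closed manifold.
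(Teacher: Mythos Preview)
There is a genuine gap: you have misread the hypothesis. In this paper $\curlR$ denotes the curvature operator (equivalently, in dimension three, a lower bound on all sectional curvatures), not the scalar curvature --- see the proof of Theorem~\ref{threedthm}, where $\curlR(g(t))(y)\ge -c$ is used directly to bound $-2\Ric(\gamma',\gamma')$ along geodesics, and the Appendix, where the hypothesis becomes $\sec\ge -V$. Your assertion that ``the initial data bounds $\curlR$ but \emph{not} the full curvature operator from below'' is therefore exactly backwards, and the evolution equation $\partial_t\curlR=\Delta\curlR+2|\Ric|^2$ you invoke for step~(ii) is the scalar-curvature equation, not the relevant one. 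With the correct reading, propagating the lower bound on $\curlR$ is not the easy step; it is the main technical difficulty of the paper --- precisely the content of Theorem~\ref{threed2}, whose proof is an intricate three-step continuity argument in the scale-invariant quantity $\curlR(g(t))(x)\,\dist_t^2(x)$, combining Theorem~\ref{thm22new}, Perelman's second pseudolocality, and Theorem~5.1 of \cite{SimSmoo} (the last of which needs a \emph{fixed} bound $|\Riem|\le c_0(v_0)/t$, not one with $c_0=N^2$ tending to infinity).

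Even granting your reading, the bootstrap has an unresolved circularity. You control the cutoff in~(ii) through~(iii), i.e.\ through $|\Riem|\le N^2/t$; the Laplacian and drift of the cutoff are then of order $N/\sqrt t$, so the constant produced by the localised maximum principle depends on $N$, not only on $\si$. Your blow-up for~(iii), however, requires the constants in~(ii) and~(iv) to remain fixed as $N_j\to\infty$: the positive asymptotic volume ratio of the limit comes from~(iv) at all scales $\le\si/4$, which you propose to obtain from Perelman's no-local-collapsing (valid only at the curvature scale $\sim\sqrt t/N$) extended by Bishop--Gromov --- but Bishop--Gromov needs the Ricci lower bound coming from~(ii), closing the loop. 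The paper avoids this by first proving the operator lower bound (Theorem~\ref{threed2}) with its own internal constants, independently of the final $N$, and only then applying Theorem~\ref{thm22new} (whose proof, incidentally, goes via Gromov--Hausdorff convergence and Alexandrov-space dimension rather than a $\kappa$-solution blow-up) to obtain $|\Riem|\le c/t$ and the volume bound. The actual derivation of Theorem~\ref{threed} is then short: Bishop--Gromov upgrades the single volume bound to uniform sub-ball noncollapsing on ${}^0B_{1-\alpha}(x_0)$, Theorem~\ref{threed2} gives $\curlR\ge -K^2/\alpha^2$ on ${}^tB_{1-2\alpha}(x_0)$ for a definite time, and Theorem~\ref{thm22new} delivers the remaining conclusions.
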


\begin{remark} By scaling  it suffices to prove the theorem
  for $r=1$ 
\end{remark}
\begin{remark} The
regions which are considered are not
necessarily {\it almost Euclidean} at time zero (see the introduction in the paper
\cite{SimSmoo} for further
remarks and comments).
\end{remark}
\begin{remark} 
This localises the global results of Theorem 1.7   of \cite{SimThree} and
Theorem 1.9 of \cite{SimColl}  which proved a similar result for
the case that the curvature operator is bounded from below by minus
one on the whole manifold, and that the solution has bounded curvature and
$\vol({{}^0B}_1(x)) \geq v_0 >0$ for all $x$ in the manifold at time
zero.
\end{remark}


The above result (Theorem \ref{threed}) is obtained as a corollary of the following theorem
(Theorem \ref{threed2})
combined with  Theorem \ref{thm22new} (which is 
 a modified version of Theorem 2.2  of
\cite{SimSmoo}), as we explain in the last
section of this paper.
\begin{theorem}\label{threed2}
Let $ r, v_0>0  $ be given and $(M^3,g(t)) _{t \in [0,T)}$ be a smooth complete solution to
Ricci flow with bounded curvature  and no boundary. Let $p_0 \in M$ be a fixed  point and assume that
\begin{itemize}
\item $\vol({{}^{0} B_{s}(x)}) \geq  v_0 s^3$  for all $s>0$ and $x
  \in M^3$ which satisfy 
${}^{0} B_{s}(x) \subseteq {}^{0} B_{r}(p_0)$, and 
\item $\curlR(g(0)) \geq - \frac{1}{r^2}$ on  ${}^{0} B_{r}(p_0)$
\end{itemize}

Then there exists a (large) $K = K(v_0) $ and a (small) $\si_0 = \si_0(v_0) > 0$
such that
\begin{itemize}
\item[(i)]$\curlR(g(t))(x) (r-d_t(x,p_0) )^2 > -  K^2$
\end{itemize}
for all  $x \in \overline{{{}^{t} B}_{r- (K\sqrt{t}/\sqrt{\si_0} ) }(p_0)}$
and $ t \leq 
\frac{r^2\si_0}{ K^2} $ and $t \in [0,T)$.
Here $d_t(x,p_0) = d(g(t))(x,p_0)$ is the distance from $x$ to $p_0$
measured using $g(t)$.
\end{theorem}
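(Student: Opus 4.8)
By the scaling remark we take $r=1$. The natural approach is a first-time / maximum-principle argument applied to a function built from the scalar curvature and a cutoff comparable to $(1-d_t(x,x_0))^2$; the cleanest candidate is
\[
v(x,t)\;:=\;\curlR(g(t))(x)\;+\;\frac{K^2}{\bigl(1-d_t(x,x_0)\bigr)^2}\qquad\text{on }{}^tB_1(x_0),
\]
although to close the argument one must replace $v$ by a variant that keeps track of the constraint $(1-d_t)^2\ge\tfrac{K^2}{\si}t$ which cuts out the region where the conclusion is asserted (a bookkeeping point I suppress). Since $\curlR(g(0))\ge-1$ and $(1-d_0)^{-2}\ge1$ on ${}^0B_1(x_0)$, one has $v(\cdot,0)\ge K^2-1>0$ once $K>1$, and $v\to+\infty$ as $d_t(\cdot,x_0)\uparrow1$. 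Hence if the conclusion fails there is a first time $t_1$ at which $v$ vanishes somewhere in ${}^{t}B_1(x_0)$; we may assume $t_1\le\tfrac{\si}{20K^2}$ (otherwise there is nothing to prove), and then $v(\cdot,t_1)$ attains a spatial interior minimum with value $0$ at some $x_1$ (that this zero cannot occur too close to $\boundary{}^{t}B_1(x_0)$ is checked with the universal bound recalled below). Writing $\rho:=1-d_{t_1}(x_1,x_0)\in(0,1]$, the equation $v(x_1,t_1)=0$ gives $\curlR(x_1,t_1)=-K^2\rho^{-2}\le-K^2$: the scalar curvature is already very negative at the first failure.

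At $(x_1,t_1)$ the Calabi barrier forms of $\nabla v=0$, $\lap v\ge0$, $\partial_t v\le0$ hold (since $d_t$ is merely Lipschitz). Inserting $\partt\curlR=\lap\curlR+2|\Ric|^2$, the evolution and Laplacian comparison for $d_t$, and using $\nabla v=0$ to eliminate $\nabla\curlR$, the inequality $\partial_t v-\lap v\le0$ reduces --- after dividing by $K^2\rho^{-4}>0$ --- to
\[
0\;\ge\;\tfrac23K^2-6+2\rho\,\bigl(\partial_t d_{t_1}-\lap d_{t_1}\bigr)(x_1),
\]
the term $\tfrac23K^2$ arising from $2|\Ric|^2\ge\tfrac23\curlR^2=\tfrac23K^4\rho^{-4}$; in other words the three-dimensional pinching of the reaction term of $\curlR$ is the engine of the proof. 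Thus $(\lap d_{t_1}-\partial_t d_{t_1})(x_1)\ge\rho^{-1}(\tfrac13K^2-3)\ge\tfrac13K^2-3$. On the other hand Perelman's distance-distortion estimate yields $(\lap d_{t_1}-\partial_t d_{t_1})(x_1)\le C(\kappa\,r_0+r_0^{-1})$ whenever $\Ric(g(t_1))\le2\kappa$ on the $g(t_1)$-balls of radius $r_0$ about $x_0$ and $x_1$ and $d_{t_1}(x_0,x_1)\ge2r_0$. Consequently, if such a Ricci upper bound near $x_0$ and near $x_1$ at time $t_1$ is available with $\kappa$ and an admissible radius $r_0$ controlled in terms of $v_0$, then taking $K=K(v_0)$ large enough forces $\tfrac13K^2-3>C(\kappa r_0+r_0^{-1})$, the contradiction we want; $\si=\si(v_0)$ is fixed small at the end so that the region and collar bookkeeping goes through.

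The substance of the proof is the curvature control feeding this last step. Two facts are available essentially for free: the universal estimate $\curlR(g(t))\ge-\tfrac3{2t}$, valid on any complete three-dimensional Ricci flow of bounded curvature (from $\partt\curlR\ge\lap\curlR+\tfrac23\curlR^2$ and the ODE comparison function $-\tfrac3{2t}$), which governs the flow outside ${}^tB_1(x_0)$ and near its boundary; and, at a first failure time, the partial information $\curlR(x,t)\,(1-d_t(x,x_0))^2>-K^2$ for all $t<t_1$, which is a genuine lower bound for $\curlR$ throughout the ball. These have to be combined with the three-dimensional curvature structure --- Hamilton--Ivey type pinching, which upgrades a lower scalar bound to a lower sectional and hence Ricci bound, together with the non-collapsing and resulting local curvature upper bounds extracted from the volume hypothesis $\vol({}^0B_s(x))\ge v_0s^3$ by a pseudolocality-type mechanism (cf.\ Theorem~\ref{thm22new}) --- to produce the Ricci bound near $x_0$ and $x_1$ used above; and a separate, local application of the same maximum-principle idea near $x_0$, using only $\curlR(g(0))\ge-1$ there, is needed to rule out $d_{t_1}(x_1,x_0)$ being arbitrarily small (so that an admissible $r_0$ exists and the Laplacian comparison term is harmless). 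Carrying out this curvature control and bookkeeping the constants so that they depend on $v_0$ alone is, I expect, the main obstacle.
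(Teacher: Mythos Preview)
Your approach is quite different from the paper's, and the step you flag as ``the main obstacle'' is in fact the heart of the matter and is \emph{not} just bookkeeping.

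\textbf{What the paper does.} The paper also argues by first failure, but instead of a direct maximum principle on $v=\curlR+K^2(1-d_t)^{-2}$ it \emph{rescales} so that the first bad point $z_0$ has $\dist_{t_0}^2(z_0)=N^2$. After this rescaling, the a~priori inequality $\curlR\,\dist_t^2\ge -\si N^2$ (valid for $t<t_0$) becomes, on balls of radius $\sim\ep_0 N$ about suitable points, the \emph{absolute} bound $\curlR\ge -4\ep_0^2$ once one fixes $\si=\ep_0^4$. This is the key: the lower curvature bound fed into Theorem~\ref{thm22new} is a fixed small number, independent of $N$, so the output $|\Riem|\le c(v_0)/t$ has $c$ depending only on $v_0$. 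Perelman's second pseudolocality then pushes this estimate up to $t_0$, and finally Theorem~5.1 of \cite{SimSmoo} (local propagation of a lower curvature bound under $|\Riem|\le c/t$) gives $\curlR(z_0,t_0)\ge -1/N$, contradicting $\curlR(z_0,t_0)=-\si$ for $N$ large. The contradiction is thus of a completely different nature from yours: it pits an \emph{improved} lower curvature bound against the equality at the failure point, not a Perelman distance-distortion inequality against $\tfrac13 K^2$.

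\textbf{Where your argument has a gap.} Your contradiction hinges on bounding $(\lap d_{t_1}-\partial_t d_{t_1})(x_1)$ from above via Perelman's Lemma~8.3(b), which needs a Ricci \emph{upper} bound near $x_0$ and $x_1$ with constants depending only on $v_0$. Neither of your proposed sources supplies this. Hamilton--Ivey pinching is a \emph{global} statement (it requires $\curlR\ge -1$ on all of $M$ at $t=0$), and a local version under only the hypotheses here is not available. If instead you invoke Theorem~\ref{thm22new}, the input lower bound near $x_1$ coming from the first-failure hypothesis is $\curlR\ge -K^2/\rho^2$, so the output constant in $|\Riem|\le N^2/t$ satisfies $N=N(v_0,K)$; this is exactly the circularity you would need to break, and ``bookkeeping'' will not do it. The paper breaks it by the scaling-plus-$\si=\ep_0^4$ device described above, which converts the $K$-dependent lower bound into a fixed one. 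Without an analogous mechanism, your inequality $\tfrac13K^2-3>C(\kappa r_0+r_0^{-1})$ cannot be closed with $C,\kappa,r_0$ independent of $K$.

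A secondary point: the role of the constraint $(1-d_t)^2\ge \tfrac{K^2}{\si}t$ is not cosmetic. In the paper it is what guarantees, after rescaling, that $t_0\le 1$ and that the relevant balls stay inside $\Omega_t$ long enough for Theorem~\ref{thm22new} and pseudolocality to bite; suppressing it obscures precisely the place where $\si$ is chosen.
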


\begin{remark}\label{proofvn}
We may change the result of the theorem to the statement 
'Then there exists a (large)  $N = N(v_0) $ and a (small) $\si_0 = \si_0(v_0) > 0$ 
such that
\begin{itemize}
\item[(i)]$\curlR(g(t))(x) (r-d_t(x,p_0) )^2 > -  \si_0 N^2$
\end{itemize}
for all  $x \in \overline{{{}^{t} B}_{r-N\sqrt{t}}(p_0)}$  which satisfy $ t \leq 
\frac{r^2}{N^2} $ and $t \in [0,T)$.'
by setting $N^2 = \frac{ K^2}{\si_0}$.
This is the statement that we shall prove.

\end{remark}

\section{Comments on the proof of Theorem \ref{threed}, and the 
use of Perelman's Pseudolocality Theorem therein}
The main ingredients of the proof of Theorem \ref{threed} are: (i)  Theorem 1.5 of \cite{SimSmoo}, 
(ii) The Pseudolocality Theorem of G. Perelman (Section 10 in
\cite{Per}) and  (iii) Theorem \ref{thm22new} of this paper (which is a
modified version of Theorem 2.2  of \cite{SimSmoo}).

The idea is essentially as follows.
Theorem \ref{threed} follows from Theorem \ref{threed2} and the results (slightly modified) of
\cite{SimSmoo}. 
So we have to prove  Theorem \ref{threed2}. We use the notation from
Remark \ref{proofvn}. We choose $N(v_0)>0$ large and $\si(v_0)$ small: they are specified in the proof.
The first part of the
proof of Theorem \ref{threed2} is a scaling argument which gets us into
a setting where the scaled  radius $r$ is now $L$, and the first valid time and point $t=t_0$ and $x=z_0$  where
$\curlR(g(t))(x) (L-d_t(x,p_0) )^2 > -  \si_0 N^2$ fails to hold
satisfies (after scaling): $t_0 \in [0,1]$, and the new radius 
$L$ is very large, in particular $L \geq N$, 
\begin{itemize}
\item[(a)] $\vol({{}^{0} B_{s}(x)}) \geq  v_0 s^3$  for all 
${}^{0} B_{s}(x) \subseteq {}^{0} B_{L}(p_0)$ and  
\item[(b)] $\curlR(g(t))(x) \dist_{L,t}^2(x) \geq -\si_0 N^2$
for all  $x \in \overline{{{}^{t} B}_{L-N\sqrt{t}}(p_0)}$  which satisfy $ t \leq t_0 \leq 1$, 
\item[(c)]  $\dist_{L,t_0}(z_0) = N$ and
$\curlR(g(t_0))(z_0)(\omega,\omega) = -\frac{\si_0 N^2}{
  \dist_{L,t_0}^2(z_0)} = -\si_0$ for a two form $\omega$ of length one
(w.r.t to $g(t_0)$),
\item[(d)] $\curlR(g(0))(x) \geq -\frac{1}{L^2} \geq -\frac{1}{N^2}$
  on  ${}^{0} B_{L}(p_0)$,
\end{itemize}
where $\dist_{L,t}(x) =  (L-d_t(x,p_0)) $ for $x \in {{}^tB}_L(p_0)$ and is
$0$ otherwise.\\
We next show that (a),(b),(c) and (d) lead to a contradiction if the
constant $\si(v_0)$ is small enough, and the constant $N(v_0)$ is
large enough. Here we give a {\bf rough} sketch of the proof idea. There are two cases that need to be considered, when obtaining this contradiction:
\\ Case i):  $t_0$ is {\it not too near to $1$  ($t_0 \leq  1-
  10\be_0$ with $\be_0 = \si_0^{1/4}$ suffices)}.\\
In this case, we see (see the proof), that $ {{}^{t}B}_{\be_0 N }(z_0)
\subseteq {{}^tB}_{L-N\sqrt{t} }(p_0)$ for all $t \leq t_0$, and hence
  we have the estimate $(b)$, at any point in the space-time cylinder
  of radius $\be_0 N$ centred at $z_0$ with base time $0$ and top time $t_0$,
$\cup_{t \in [0, t_0]}    {{}^{t}B}_{\be_0 N }(z_0) \times \{t\}$, and hence
$\curlR(\cdot,\cdot) \geq-1$ on the same space time cylinder of half
the radius, in view of (b). Note that, the radius $\be_0 N$ is very large by
assumption (see proof). 
Regularity estimates of previous papers (in particular the paper
\cite{SimSmoo}), which  do not rely on
 Perelman's Pseudolocality result, show us that the norm of the full
 curvature tensor is bounded by
$\frac{C(v_0) }{t}$ for some constant $C(v_0)$,  at any point $(x,t)$
in a  space time cylinder of a smaller (but still large enough) radius, with
the same centre point, and base resp. top time. Then, (d) and a
regularity result from \cite{SimSmoo}, tells us that $\curlR(z_0,t_0) \geq -\ep(N) \to 0$ as $N \to \infty$: this
contradicts $(c)$ if $N=N(v_0)$ is chosen large enough initially.
That is: the case $t_0 \leq 1- 10 \be_0$ follows from this scaling or 'blow up' argument and the  
(slightly modified) results of \cite{SimSmoo}, and the use of Perelman's
Pseudolocality Theorem is {\bf not} necessary in this case.\\
\\Case ii): The case that $ t_0 $ is near to one, that is $ 1\geq t_0\geq
1-10\be_0$.\\
This case  cannot be immediately  handled in the same way as Case i).
The reason is: it could be that we can't find a large enough radius
$R>0$ such that
${{}^tB}_{R}(z_0) \subseteq {{}^{t} B}_{L-N\sqrt{t}}(p_0)$ for all
$t\leq t_0$, and hence we
{\bf do not} have the estimate $(b)$ on some space-time cylinder
with large (enough) radius centred at $z_0$ with base time $0$ and top
time $t_0$.
In the extreme case we have $t_0 =1$, and hence  $d_1(p_0,z_0) = L-N$ (since
$\dist_{L,1}(z_0) = N$) and hence $ z_0$ is in the boundary of
${{}^{t_0} B}_{L-\sqrt{t_0}N}(p_0) ={{}^{1} B}_{L-N}(p_0)$ at time
$t_0$. To get around this problem we proceed as
follows. Using the method described in Case (i), we see that
$|\Riem(x,s)|\leq \frac{C(v_0)}{s}$ for all points $(x,s)$
on  some space time cylinder with large radius centred at  $z_0$ with
base time $0$ and top  time $t$, as long as 
 $t \leq 1-10\be_0$.
The (second) Pseudolocality Theorem of G. Perelman for times $t \in [1- 10
\be_0,t_0]$, combined with the estimates  which were obtained for $t
\leq 1- 10 \be_0 $, allows us to extend this estimate to
$|\Riem(\cdot,t)|\leq \frac{\ti C(v_0)}{t}$ for all $t \in [0,t_0]$ on
some space-time cylinder with large radius centred at  $z_0$ with
base time $0$ and top  time $t_0$.
Now we use the regularity result of \cite{SimSmoo}, as  in Case (i), and get that $\curlR(z_0,t_0) \geq -\ep(N) \to 0$ as $N \to \infty$, which
contradicts $(c)$ if $N=N(v_0)$ is chosen large enough initially.
\\
We write '{\bf rough} sketch' above, because many of the difficulties which
occur in the proof are avoided in this sketch. In particular, we
actually prove estimates on cylinders of the type explained above  with
arbitrary centre  points $y_0 \in {{}^0 B}_{L-N +
    \frac{3}{4} \be_0 N}(p_0)$ instead of $z_0$, and then we show, by proving estimates on
  how distances can change, that $z_0$ is in fact a  point in ${{}^0 B}_{L-N +
    \frac{3}{4} \be_0 N}(p_0)$, and hence the estimates of the type
  explained above (for $z_0$) do hold.\\
Note that the use of G. Perelman's Pseudolocality Theorem is very
necessary for this proof.
The difficult case is $t_0 \geq
1-10\be_0$. As we pointed out above, this corresponds to $z_0$ being
close to or in the boundary of
$B_{L-N \sqrt{t_0}}(p_0)$. In all such blow up arguments in geometric analysis, this is the difficult case and there is no
guarantee that this case can be dealt with. Whether this case can be
dealt with or not will depend on the flow being considered. For the  Ricci flow, the Pseudolocality Theorem enables us to
deal with this case.\\
Note that Theorem \ref{threed} (respectively Theorem \ref{threed2}) of
this paper is  {\it almost }  a truly local theorem: we only need
assumptions at time zero along with the assumption that the solution
we are considering has bounded curvature and is complete. 
We say {\it almost}, because we require an assumption on the solution
itself, namely that the solution has bounded
curvature and is complete, in order to apply the Pseudolocality Theorem.
The regularity theorem, Theorem 1.5, of the paper \cite{SimSmoo} is
not a truly local theorem: one
of the requirements of that theorem is that $|\Riem| \leq c/t$ for
some $c>0$ on
the ball ${{}^t B}_{r}(p_0)$  (for all $t \in [0,T]$) that we are considering.
This is a strong assumption on the solution.

\section{Some local results}\label{sec22new}
In this section we prove some lemmata, which follow readily from
previously proved results. These results will be required in the proof of Theorem \ref{threed2}.\\
First we prove a modified version of  Theorem 2.2 of \cite{SimSmoo}.
The result of the theorem below and that of Theorem 2.2 of \cite{SimSmoo}
differ in the following way.
In Theorem 2.2 of \cite{SimSmoo} condition (a) there was 
'$  [{\rm a}]: \vol({{}^{t}B}_{r}(p_0))\geq  v_0 r^n $ for all $t
\in [0,T)$'. Here we only require $\vol({{}^{0}B}_{r}(p_0))\geq v_0
r^n $ at time zero.

\begin{theorem}\label{thm22new}
Let $  r,V,v_0>0,1>\al >1/2 $ and $(M^n,g(t))_{t \in [0, T)}$  be a
smooth, complete solution to Ricci-flow with no boundary
which satisfies
\begin{itemize}
\item[{\rm (a)}] $\vol({{}^{0}B}_{r}(p_0))\geq  v_0 r^n $, 
\item[{\rm (b)}] $ \curlR (x,t) \geq -\frac{V}{r^2}$ for all $t  \in [0,T) , x \in  {}^t B_{r}(p_0) $.
\end{itemize}
Then, there exist  $ 0< m_0 = m_0(n,v_0,\al,V),$  $c_0= c_0(n, v_0,\al,V)< \infty$ and
$\ti v_0=\ti v_0(n, v_0,V) >0$ such that
\begin{itemize}
 \item[{\rm (c)}]
$|\Riem(x,t)| <\frac{c_0}{ t}$ for all  
$x \in  {}^t B_{r (1-\al)}(p_0),$ $t  \in [0, m_0 r^2) \cap
[0,T),$ and \item[{\rm (d)}]
$\vol({{}^{t}B}_{s}(p_0))> \ti v_0 s^n$ for all $t  \in  [0,m_0 r^2
)\cap[0,T),$ for all $s<r$.
\end{itemize}
\end{theorem}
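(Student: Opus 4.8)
The plan is to reduce the new statement to the original Theorem 2.2 of \cite{SimSmoo}, whose only stronger hypothesis is that the volume lower bound $\vol({}^{t}B_r(x_0))\geq v_0 r^n$ is assumed for \emph{all} $t\in[0,T)$, not just at $t=0$. So the whole content here is a short-time persistence result: starting from a volume bound at time zero together with the curvature lower bound (b), one must show that $\vol({}^{t}B_r(x_0))$ cannot drop below some fixed fraction of $v_0 r^n$ on a controlled time interval $[0,\tau]$, and then invoke the old theorem on that interval (rescaled so that $\tau$ plays the role of $T$).

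First I would record the standard volume–distortion estimates under Ricci flow. Since $\partt g = -2\Ricci$, the volume form satisfies $\partt d\!\vol = -\Sc\, d\!\vol$, and the distance function satisfies a differential inequality of the form $\ddt d_t(x,x_0) \geq -C\sqrt{V}/r$ wherever the Ricci curvature along a minimizing geodesic is bounded below — which, by (b), holds for points inside ${}^t B_r(x_0)$. The subtlety is that (b) only controls curvature \emph{inside} the evolving ball of radius $r$, so one has to be slightly careful: I would work with a slightly smaller radius, say track $\vol({}^{t}B_{r'}(x_0))$ for $r' = r(1-\si/2)$ or similar, and use the fact that for a short time the metric balls ${}^tB_{r'}(x_0)$ remain contained in ${}^0B_r(x_0)$ (distances shrink at a controlled rate under the Ricci lower bound, and can only be compared upward using the a priori bounded curvature $k_0$, which does not enter the final constants). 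The scalar curvature lower bound $\Sc \geq -nV/r^2$ from (b) gives $\partt d\!\vol \geq -(nV/r^2)\,d\!\vol$, hence $\vol({}^{t}B_{r'}(x_0)) \geq e^{-nVt/r^2}\vol\big(\text{(the relevant time-zero region)}\big)$; combined with the containment of regions just described, this yields $\vol({}^{t}B_{r}(x_0)) \geq \vol({}^{t}B_{r'}(x_0)) \geq \tfrac12 v_0 r^n$ for all $t \leq \tau$, where $\tau = \tau(n,v_0,\si,V) r^2$ is chosen small enough to absorb both the exponential factor and the $(1-\si/2)^n$ loss from shrinking the radius.

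Having established the uniform volume bound $\vol({}^{t}B_r(x_0)) \geq v_0' r^n$ (with $v_0' = v_0/2$, say) on $[0,\tau]\cap[0,T)$, I would then apply Theorem 2.2 of \cite{SimSmoo} to the solution restricted to the time interval $[0,\min(\tau,T))$: its hypotheses (a) (now for all $t$ in this interval) and (b) are satisfied with $v_0'$ in place of $v_0$, so it produces constants $N_0 = N_0(n,v_0',\si,V)$ and $\ti v_0 = \ti v_0(n,v_0',V)$ giving the curvature decay $|\Riem| < N_0^2/t + N_0^2/r^2$ on ${}^tB_{r(1-\si)}(x_0)$ and the improved volume bound on $[0,\min(\tau, r^2/N_0^2, T))$. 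Finally I would set $N^2 = \max(N_0^2,\, r^2/\tau)$ (equivalently, enlarge $N$ so that $r^2/N^2 \leq \tau$), which makes the time constraint $t \leq r^2/N^2$ automatically imply $t\leq\tau$, so that conclusions (c) and (d) hold on the full interval $[0, r^2/N^2)\cap[0,T)$ as stated; all dependencies of $N$ are on $n,v_0,\si,V$ only, as required.

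The main obstacle I anticipate is the geometric bookkeeping in the second paragraph: making rigorous the claim that the evolving balls ${}^tB_{r'}(x_0)$ stay inside ${}^0B_r(x_0)$ for short time while still having the curvature bound (b) available along the geodesics one integrates over. One cannot simply quote a global distance-distortion lemma because (b) is only a one-sided, localized bound; the clean way is a continuity/open-closed argument on the time interval, or an application of Perelman's distance-distortion lemma (Lemma 8.3 of \cite{Per}) in the localized form used already in \cite{SimSmoo}, keeping track that the radius one tracks is strictly less than $r$ so the curvature hypothesis is never evaluated at the boundary. Everything else is a routine assembly of known Ricci flow inequalities plus a single invocation of the already-proven Theorem 2.2 of \cite{SimSmoo}.
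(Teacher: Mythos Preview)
Your reduction strategy is natural, but the core step fails because of a sign error in the volume-evolution inequality. Under Ricci flow $\partial_t\, d\!\vol_{g(t)} = -\Sc\, d\!\vol_{g(t)}$, so a \emph{lower} bound $\Sc \geq -nV/r^2$ yields $\partial_t\, d\!\vol \leq (nV/r^2)\, d\!\vol$, i.e.\ an \emph{upper} bound on volume growth, not the lower bound $\partial_t\, d\!\vol \geq -(nV/r^2)\, d\!\vol$ that you write. To prevent volume collapse via this evolution equation you would need an \emph{upper} bound on scalar curvature, which hypothesis (b) simply does not provide; positive curvature can concentrate and collapse volume arbitrarily fast without violating (b). The same reversal occurs in your distance-distortion claim: $\Ric \geq -K$ along a geodesic gives $\partial_t d_t \leq K\, d_t$ (distances cannot grow too fast), whereas a lower bound on $\partial_t d_t$ of Perelman type requires an \emph{upper} bound on Ricci near the endpoints. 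So the ``short-time persistence'' paragraph does not go through, and the argument cannot get off the ground.

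This is precisely why the paper's proof is more elaborate. It runs a contradiction/continuity argument: assume the volume of a small ball ${}^tB_{1/100}(x_0)$ stays above some tiny threshold $v$ on a maximal interval $[0,S_{max})$; on that interval the old Theorem~2.2 of \cite{SimSmoo} applies and supplies the two-sided curvature bound $|\Riem|\leq c/t$. Only with this \emph{upper} bound in hand (coming from the theorem one is trying to bootstrap, not from (b)) can one control $\partial_t d_t$ from below and show that ${}^{S_{max}}B_{1/800}(x_0)$ is Gromov--Hausdorff close to ${}^{0}B_{1/800}(x_0)$. The contradiction then comes from Alexandrov-space dimension theory: a collapsing sequence and a non-collapsing sequence with sectional curvature bounded below cannot have the same Gromov--Hausdorff limit (this is the content of the Appendix). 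That geometric input is what replaces the unavailable direct volume estimate, and it is the essential idea you are missing.
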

\begin{remark}
Note that here we do not require that $(M^n,g(t))_{t\in[0,T)}$ is a
solution with bounded curvature.
\end{remark}
\begin{proof}
Let $(M,g(t))_{t\in [0,T)}$ be as in the statement of the theorem.
Without loss of generality, after scaling, we have $r=1$.  We prove
the case  $s=1$ : the general statement in $(d)$ then follows
from the Bishop-Gromov comparison principle.\\
We know that $\vol( {{}^0  B}_{1/800}(p_0)) \geq
V_0(v_0,V,n) >0$ due to the Bishop-Gromov volume comparison principle. 
From the Appendix, Theorem \ref{appendixthm},  we see that the
following is true: there exists an $\ep_0 =
\ep_0(V_0,n) = \ep_0(v_0,V,n) >0$ such that if $ d_{GH}( {{}^t
  B}_{1/800}(p_0),  {{}^0  B}_{1/800}(p_0) ) \leq \ep_0$ for some $t
\in [0,T)$, then $\vol ({{}^t
  B}_{1/800}(p_0)) > \ep_0$.
Assume there is a first time $S \in (0,T)$ where  $\vol ({{}^t
  B}_{1/800}(p_0)) >\ep_0$ is violated: 
$\vol ({{}^t
  B}_{1/800}(p_0)) >\ep_0$ for all  $0\leq t <S $ and $\vol ({{}^{S}
  B}_{1/800}(p_0)) = \ep_0$.
From Theorem 2.2 of \cite{SimSmoo} we have:
$|\Riem|<\frac{N^2}{t}$ on ${{}^tB}_{1-\al}(p_0)$ for all $t \leq
\min( \hat T(\ep_0,n,\al,V),S)  = \min( \hat T(v_0,V,n,\al) ,S)$ for
some
$N = N( \ep_0,n,\al,V) = N(v_0,V,n,\al) $ , and $\hat T = \hat T(\ep_0,n,\al,V)  = \hat T(v_0,V,n,\al) >0$.
But then, this estimate, (b) and \cite{HaForm} (Lemma 17.3 combined
with Theorem 17.4)  imply (for such $t$) that 
$e^{ta(n) V}d_0(x,y) \geq d_t(x,y) \geq d_0(x,y)- a(n) N \sqrt t $
for all $x,y \in {{}^tB}_{1/200}(p_0)$ (see section 3 of \cite{HaFour}), since any
geodesic at time $t$ between such $x$ and $y$ must lie in 
${{}^tB}_{1/2}(p_0)$, due to the triangle inequality.
This means that
${{}^t B}_{1/700}(p_0) \subseteq {{}^0 B}_{1/400}(p_0) \subseteq {{}^tB}_{1/200}(p_0)$
and 
\begin{eqnarray}
(1+ \ep_0^2) d_0(x,y) \geq d_t(x,y) \geq d_0(x,y) -\ep_0^2 \mbox{ on }\label{GHep}
{{}^t B}_{1/700}(p_0)
\end{eqnarray}
for all such $t$ which also satisfy $t \leq \ti
T(v_0,V,\al,n)$, where $\ti T(v_0,V,\al,n)>0$ is small enough.
Assume $S \leq \min(\ti T(v_0,V,\al,n), \hat T(v_0,V,n,\al) ,T)$. Then we  have
$d_{GH}({}^{S}  B_{1/800}(p_0),  {}^{0}  B_{1/800}(p_0)) <
\ep_0$  (a Gromov-Hausdorff
approximation \\$f: {}^{S}  B_{1/800}(p_0) \to {}^{0}
B_{1/800}(p_0)$ is given by $f(x) = x$ for $x \in {}^{S}
B_{1/800}(p_0) \cap {}^{0} B_{1/800}(p_0)$ and $f(x) =\ti x$ for $x
\in  {}^{S}
B_{1/800}(p_0) \backslash {}^{0} B_{1/800}(p_0)$
where $\ti x \in  {}^{0} B_{1/800}(x_0)$ is an arbitrary point with
$d_0(x,\ti x) \leq 10 \ep^2_0$: such a point exists in view of the
inequalities \eqref{GHep}).
This is a contradiction to the definition of $\ep_0$.
\end{proof}

The next lemma is an integrated version of  Lemma 8.3 (b) of Perelman, \cite{Per}, in the case that the curvature behaves like a constant divided by time. 
\begin{lemma}\label{Littlericci}
For any $j_0,\ell>0, n\in \N$, there exists a constant $a(n,j_0,\ell)$ such that the following is true.
Let $(M^n,g(t))_{t\in [0,T]}$ be a  complete smooth solution to Ricci
flow with bounded curvature, and no boundary, and let $s_0 \leq \min(1,T)$. 
Assume that $y_0,x_0 \in M$ and that 
$|\Ricci(\cdot,t)| \leq \frac{\ell}{t}$ on both ${^{t}B}_{j_0}(x_0)$ and
${^{t}B}_{j_0}(y_0)$ for all $ t \in [0,s_0]$.
Then $d_s(y_0,x_0)
\geq d_0(y_0,x_0) -a(n,j_0,\ell)$ for all $0 \leq s \leq s_0$.
\end{lemma}
\begin{proof}

For $t \leq j^2_0$, we have $ |\Ric(x,t)| \leq \frac{\ell}{t}$ 
for any $x \in {{}^t B}_{\sqrt t} (x_0)$ and for any $y \in {{}^t B}_{\sqrt t} (y_0),$
since $\sqrt{t} \leq j_0$. Hence we may apply Lemma  8.3 (b) of \cite{Per} to this
with $t_0,K,r_0$ of Lemma  8.3 (b) of \cite{Per} given by  $t_0 =t$, $K = \frac{\ell}{t}$, $r_0 =\sqrt{t}$ to obtain:
\begin{eqnarray}
\partt d_t(x_0,y_0) && \geq -2(n-1)(\frac{2}{3}Kr_0 + r_0^{-1}) \cr
&& = -2(n-1)(\frac{2\ell\sqrt{t}}{3 t} + \frac{1}{\sqrt{t}} )\cr
&& =: \frac{1}{\sqrt{t} }( -\frac{4(n-1)\ell}{3} + 1),\label{integrate1}
\end{eqnarray}
for $t \leq j_0^2$,
where the time derivative is to be understood in the sense of forward
difference quotients.
For $t \in [j_0^2,s_0]$, we have that 
 $|\Ricci(\cdot,t)| \leq \frac{\ell}{j_0^2}$ and hence
applying  Lemma  8.3 (b) of \cite{Per} with  $t_0 =t$, $K = \frac{\ell}{j_0^2}$, $r_0 = j_0$ we get
\begin{eqnarray}
\partt d_t(x_0,y_0) && \geq -2(n-1)(\frac{2}{3}Kr_0 + r_0^{-1}) \cr
&& = -2(n-1)(\frac{2  j_0 \ell }{3 j_0^2} + \frac{1}{j_0} ) \label{integrate2}
\end{eqnarray}
for $t \in [j_0^2,s_0]$,  
where the time derivative is to be understood in the sense of forward
difference quotients.
Integrating first Equation \eqref{integrate1} from $0$ to $j_0^2$ and then Equation 
\eqref{integrate2} from $j_0^2$ to $s$ gives us the result (if $s \leq j_0^2$ then we
merely integrate Equation \eqref{integrate1} from $0$ to $s$).
\end{proof}

The last lemma of this section is a  technical lemma, which uses some facts from differential geometry. 
\begin{lemma}\label{littletechnical}
For every  $\ti v_0, \ell >0$ and $\de \in (0,1)$ the following is true.
Let $(M^n,g)$ be a smooth Riemannian manifold, $x_0 \in
M$, with no boundary, such that the closure of
${B}_1(x_0)$ is compactly contained in $M$ and 
\begin{itemize}
\item[(i)] 
$|\Riem| \leq \ell $ on ${B}_1(x_0)$
 and 
\item[(ii)]
$\vol({B}_1(x_0)) \geq \ti v_0$.
\end{itemize}
 Then there exists an $R_0(n,\ell,\ti v_0,\de) > 0$ such that
\begin{eqnarray*}
|\Riem|(\cdot) \leq \frac{1}{R_0^2} \mbox { on } { B}_{R_0}(x_0)\\
\vol({B}_{R_0}(x_0)) \geq \omega_n (1-\de) R_0^n,
\end{eqnarray*}
where $\omega_n$ is the volume of an $n$-dimensional Euclidean ball of radius one.
\end{lemma}

\begin{proof}

The inequalities (i) and  (ii) imply  that\\
$\inj(g)(x_0) \geq i_0(n,\ti v_0,\ell) > 0 $ for
some $i_0(n,\ti v_0,\ell) >0$ in view of the estimate of
J. Cheeger/M. Gromov/M. Taylor, (4.22) in  Theorem 4.3 of \cite{CGT}. Hence, using Riemannian normal coordinates (see Theorem 1.53 and the
proof thereof in \cite{Aub}), we see that
\begin{eqnarray*}
\vol( { B}_{ r}(x_0) ) \geq \omega_n (1-\de) r^n
\end{eqnarray*}
 for all $r \leq R_0(n,\ell,i_0(n,\ti v_0,\ell), \de) = R_0(n,\ell,\ti v_0,\de)$, 
if $R_0(n,\ell, \ti v_0,\de)>0$ is small enough,
where  $\omega_n$ is the volume of the unit  ball in
$n$-dimensional Euclidean  space.
Without loss of generality, we also have 
\begin{eqnarray*}
|\Riem(y)| \leq \frac{1}{R^2_0}
\end{eqnarray*}
on ${B}_{R_0}(x_0)$,  since without loss of
generality $\frac{1}{R^2_0(n,\ell,\ti v_0,\de)} \geq \ell$: if not, decrease
$R_0(n,\ell,\ti v_0,\de)$ until it is.

\end{proof}

\section{Proof of Theorem \ref{threed2}}

In order to obtain local estimates we introduce the following distance
function for balls which are evolving in time under the Ricci
flow. Let $(M,g(t))_{t \in [0,T)}$ be a solution to Ricci flow.
Let ${{}^t B}_r(p_0)$ be the open ball of radius $r$ at time $t$ centred at $p_0
\in M$. Notice that $x \in {{}^t B}_r(p_0)$ does not necessarily
guarantee that $x \in {{}^s B}_r(p_0)$ for a different time $s$. For $x \in  {{}^t B}_r(p_0)$ we define 
\begin{eqnarray*}
\dist_{r,t}(x):= (r-d_t(p_0,x))  
\end{eqnarray*}
 where $d_t(p_0,x)$ is the distance from
$x$ to $p_0$ measured using the evolving metric $g(t)$.
Cut-off functions of this  type were used in the papers
\cite{Chen},\cite{SimLoc} and \cite{SimHab} in combination with Ricci flow to
prove that local estimates hold, {\bf if} one a priori assumes that
the curvature satisfies a bound of the type $|\Riem(\cdot,t)| \leq
c/t$.

Notice that $0 < \dist_{r,t}(x) \leq r $ for all $x \in  {{}^t B}_r(p_0)$.
$\dist_{r,t}(x)$ is a measure of how far the point $x$ at time $t$ is from the boundary of
${{}^tB}_r(p_0)$.
In the case that $g(t) = \de$ the Euclidean metric on $\R^n$, then we
see that $\dist_{r,t}(x):= (r-d_t(p_0,x))= (r- |x-p_0|)$ is the distance
from $x$ to the boundary of $B_r(p_0)$.
Due to scaling it will be sufficient to consider the case $r=1$.
Let ${{}^{0}B}_1(p_0)$ be a ball at time zero with curvature bounded
from below by minus one.
The following theorem implies a lower bound on the curvature at $x \in
{{}^{t}B}_1(p_0)$ depending on $\dist_{1,t}(x)$ at later times for a well defined time interval, as long
as $\dist_{1,t}^2(x) \geq N^2t$ where $N^2$= $N^2(v_0)$  is sufficiently
large, and $v_0$ is a lower bound (at time zero) on the volume quotient of balls
contained in the ball we are considering, and the curvature of ${{}^{0}B}_1(p_0)$ at time
zero is bounded from below by $-1$.
Combining this theorem with the results  of Section \ref{sec22new} will imply
the result of Theorem \ref{threed}  stated in the introduction (see
Section \ref{secthreed} for the proof of Theorem \ref{threed}). Here
we restate Theorem \ref{threed2} for the case $r=1$ using the notation
that we just introduced, and Remark  \ref{proofvn}.

\begin{theorem}\label{threedthm}
Let $(M^3,(g(t)) _{t \in [0,T)}$ be a smooth complete solution to
Ricci flow with bounded curvature  and no boundary and
$v_0 >0$. Let $p_0 \in M$ be a point
such that
\begin{itemize}
\item $\vol({{}^{0} B_{r}(x)} )\geq  v_0 r^3$  for all $x \in M^3$,
  and $r>0$ which satisfy
${}^{0} B_{r}(x) \subseteq {}^{0} B_{1}(p_0)$,
and
\item $\curlR(g(0)) \geq - 1$ on  ${}^{0} B_{1}(p_0)$
\end{itemize} 
Then there exists an $N = N(v_0) $, $\si_0 = \si_0(v_0) > 0$,
such that
\begin{itemize}
\item[(i)] $\curlR(g(t))(x) \dist_{1,t}^2(x) > - \si_0 N^2$
\end{itemize}
for all  $x \in \overline{ {{}^{t} B}_{1-N\sqrt{t}}(p_0)} $  which satisfy 
$ t \leq 
\frac{1}{N^2} $ and $t \in [0,T)$.
\end{theorem}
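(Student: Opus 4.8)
The strategy is a contradiction/point-picking argument in the spirit of pseudolocality proofs. Suppose the statement fails. Then for every candidate $N$ there is a solution satisfying the hypotheses together with a bad space-time point; we normalize so that $\mathrm{dist}_t^2(x) \geq N^2 t$ is saturated or violated exactly at a first time, and we pick the point where the scale-invariant quantity $\mathrm{R}(g(t))(x)\,\mathrm{dist}_t^2(x)$ first drops to $-\sigma N^2$. A standard maximum-principle / first-violation analysis at this point lets us control the geometry on a parabolic neighborhood \emph{before} the bad time: on that neighborhood we have a lower curvature bound of the form $\mathrm{R} \geq -C/\mathrm{dist}_t^2$, and combined with the scale $\mathrm{dist}_t^2 \sim N^2 t$ this gives the hypothesis of Theorem~\ref{thm22new} applied on an appropriate sub-ball (with $V$ proportional to the constant $C$). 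The key input is that the volume lower bound at time zero on sub-balls, $\vol({}^0B_r(x)) \geq v_0 r^3$, is exactly condition (a) of Theorem~\ref{thm22new} after rescaling the sub-ball to unit size; crucially the distance function $\mathrm{dist}_t$ only decreases along the flow under a lower Ricci bound (up to the controlled error in \eqref{GHep}-type estimates), so sub-balls at time zero remain comparable to sub-balls at later times.

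The heart of the argument is then a blow-up: rescale at the chosen bad point by the curvature scale (equivalently by $1/\mathrm{dist}_t^2(x)$). Because $N$ is being sent to infinity, the bad point sits at parabolic distance $\to\infty$ (in rescaled coordinates) from the boundary of the evolving ball, so in the limit we obtain a complete ancient (or at least long-time-backward) solution to Ricci flow in dimension three. Along the rescaling the curvature is bounded (by the choice of first bad point: $\mathrm{R}\,\mathrm{dist}^2 > -\sigma N^2$ before the bad time forces $|\mathrm{Riem}| \leq$ const$/\mathrm{dist}^2$ via Theorem~\ref{thm22new}'s estimate (c)), the curvature operator has a lower bound that scales to $\mathrm{R} \geq 0$ in the limit (since $\mathrm{R}\,\mathrm{dist}^2 \to -\sigma N^2$ while $\mathrm{dist}^2$ is the rescaling factor, the rescaled scalar curvature at the bad point is exactly $-\sigma$, but everywhere on the rescaled ball it is $\geq -\sigma$, hence $\to 0^-$ if we also send $\sigma\to 0$ appropriately — or we keep $\sigma$ fixed small and get $\mathrm{R}\geq -\sigma$), and the volume is non-collapsed at the base point thanks to the propagated volume bound. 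Hamilton's compactness theorem gives a smooth limit solution.

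The contradiction comes from the structure of such limits: the limit is a complete three-dimensional ancient Ricci flow, $\kappa$-noncollapsed, with $\mathrm{R} \geq -\sigma$ (or $\mathrm{R}\geq 0$ in the clean version), and at the base point at the final time it has $\mathrm{R} = -\sigma < 0$ (the bad point survives the limit as a genuine point of negative scalar curvature). But by Hamilton–Ivey pinching in dimension three the limit has nonnegative curvature operator, and then the strong maximum principle / Bishop–Gromov force either $\mathrm{R}\geq 0$ everywhere or a splitting, contradicting the strictly negative value at the base point once $\sigma$ is taken small enough relative to the noncollapsing constant. Equivalently one invokes the rigidity of $\kappa$-noncollapsed ancient solutions with almost-nonnegative scalar curvature. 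This is exactly the three-dimensional analogue of the two-dimensional argument of \cite{SimSmoo}, with Hamilton–Ivey replacing the automatic sign of curvature in dimension two.

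**Main obstacle.** The delicate point is \emph{not} the blow-up itself but extracting, \emph{before} passing to the limit, a clean curvature bound on a full parabolic neighborhood of the bad point from only the one-sided, distance-weighted information — i.e.\ turning the pointwise inequality $\mathrm{R}\,\mathrm{dist}_t^2 > -\sigma N^2$ at earlier times into a genuine $|\mathrm{Riem}| \leq C/(\text{scale})^2$ bound via Theorem~\ref{thm22new}. This requires carefully tracking how $\mathrm{dist}_t$ itself evolves (so that the ball on which one applies Theorem~\ref{thm22new} is stable in time), and it is where the constraint $t \leq 1/(20N^2)$ and the factor $\sigma$ in the statement get calibrated. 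Getting the bookkeeping of constants consistent — so that $\sigma = \sigma(v_0)$ and $N = N(v_0)$ close the loop — is the technical crux.
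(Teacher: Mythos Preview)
Your overall shape (first-bad-time contradiction, rescale so that $\dist_{t_0}(z_0)=N$, use the induction hypothesis plus Theorem~\ref{thm22new} to get $|\Riem|\le c/t$ on a neighbourhood) is the same as the paper's. The gap is in the endgame. After the paper's rescaling the new time $\tilde t_0$ satisfies $\tilde t_0\le 1$ while the spatial distance to the boundary is $N\to\infty$; so any blow-up limit you extract lives on a time interval of length at most $1$, \emph{not} an ancient interval. Hence the appeal to ``structure of $\kappa$-noncollapsed ancient solutions'' or a global Hamilton--Ivey pinching on the limit does not apply, and you would in any case still need a \emph{local} propagation-of-lower-bound statement to conclude that $\mathcal R(z_0,t_0)\ge -o_N(1)$ contradicts $\mathcal R(z_0,t_0)=-\sigma$. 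The paper supplies exactly that: instead of passing to a limit it applies Theorem~5.1 of \cite{SimSmoo} (a local Hamilton--Ivey type estimate, valid once one knows $|\Riem|\le c/t$ on a large ball and $\mathcal R\ge -1/N^2$ at $t=0$) directly at finite $N$ to get $\mathcal R(z_0,t_0)\ge -1/N$.

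There is a second missing ingredient that your ``Main obstacle'' paragraph flags but does not resolve. The distance-evolution argument (your tracking of $\dist_t$) only keeps the relevant ball inside $\Omega_t=\{\dist_t^2\ge N^2t\}$ for $t\le 1-10\epsilon_0$; since $\tilde t_0$ may lie in $(1-10\epsilon_0,1]$, there is a genuine time gap across which the curvature bound $|\Riem|\le c/t$ must be extended before Theorem~5.1 of \cite{SimSmoo} can be invoked. The paper bridges this gap with Perelman's second pseudolocality theorem (Theorem~10.3 of \cite{Per}): the estimates obtained up to time $1-10\epsilon_0$ give an almost-Euclidean ball at that time, and pseudolocality carries $|\Riem|\le c/t$ forward to $t_0$. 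Without this step (or an equivalent device) the argument cannot close, and your proposal does not mention it.
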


\begin{proof}
$v_0$ is fixed throughout  the proof and $\si_0 = \si_0(v_0)>0$ is a small
constant determined in the proof.

Assuming the theorem is false for some given $N = N(v_0,\si(v_0)) = N(v_0)$ large  and $\si_0 =
\si_0(v_0)>0$ small (to be determined in the proof), then there must be a first time $t_0 \leq 
\frac{1}{N^2}<T$
where the theorem fails. That is (i) is violated at $t_0$.
We show that if $\si(v_0)>0$ is chosen small enough, and  $N=N(v_0)>0$
is chosen large enough, that this leads to a contradiction. Let $\be_0
= \si_0^{1/4} $ throughout the proof. 
At the end of the theorem, see Remark \ref{constants}, we give a
precise explanation of how $N$ and $\si$ can be chosen at this point of the theorem.
\\(i) is
violated at some first time $t_0$ means
that we can find a $z_0 \in
{{}^{t_0}B}_{1}(p_0)$ and $0<t_0  \leq 
\frac{1}{N^2}<T$ with $ \dist_{1,t_0}^2(z_0) \geq N^2{t_0}$
such that
$\curlR(g(t_0))(z_0)(\psi,\psi) \dist_{1,t_0}^2(z_0) = -\si_0 N^2$ for
some two form $\psi$ of length one (measured with respect to $g(t_0)$), and the conclusions of the theorem are
correct for $0<t < t_0$.
Let $L^2= \frac{N^2}{\dist_{1,t_0}(z_0)^2}$. Remembering that
$\dist_{1,t_0}(z_0)^2 \leq 1$ we see that  $L \geq N$. 
We scale our solution by an appropriate constant, so that the new
solution has 
$\ti \dist_{L,\ti t_0}^2(z_0) = N^2$ at the  new time $\ti t_0$ which corresponds
to the old time $t_0$ in the original solution: 
define $\ti g(\cdot, \ti t):= L^2 g(\cdot,\frac{\ti t}{L^2})$. 
This solution is defined for $\ti t \in [0,\ti T = L^2T\geq N^2T)$.
Then define for $x \in {{}^{\ti t}B}_L(p_0)$
\begin{eqnarray}
 {\ti \dist}_{L,\ti t}^2(x) &&:= (L- \ti d_{\ti   t}(x,p_0))^2 \label{newzero}\\
&&=   L^2(1-  d_{t}(x,p_0))^2\cr
&&= L^2 \dist_{1,t}^2(x) 
\end{eqnarray}
where $t = \frac{\ti t}{L^2} $ and $ \ti d_{\ti t}(x,p_0)$ is the distance measured
with respect to $\ti g(\ti t)$.

This value is positive since $x \in {{}^{\ti t}B}_L(p_0)$ if and only if $ \ti
d_{\ti t}(x,p_0) < L$.
Using the definition of $\ti t$ and $\ti{\dist}_{L,\ti t}$ we see that 
\begin{eqnarray}
 \dist_{1,t}^2(x) \geq N^2t \iff  \ti{\dist}_{L,\ti t}^2(x) \geq N^2{\ti
   t}. \label{newone}
\end{eqnarray}
Also,
\begin{eqnarray}
 \ti{\dist}_{L,\ti t_0}^2(z_0)  =
L^2 \dist_{1,t_0}^2(z_0) 
=  \frac{N^2}{\dist_{1,t_0}(z_0)^2}\dist_{1,t_0}^2(z_0) 
= N^2\label{newtwo}
\end{eqnarray} and
 \begin{eqnarray*}
\ti t_0  = t_0 L^2 = t_0 \frac{ N^2}{\dist_{1,t_0}(z_0)^2} \leq
t_0\frac{N^2}{N^2t_0} = 1. 
\end{eqnarray*}
Notice that 
\begin{eqnarray}
\curlR(\ti g(\ti t))(x)\ti\dist_{L,\ti t}^2(x) = \curlR(g(t))(x) 
\dist_{1,t}^2(x),\label{newthree}
\end{eqnarray}
in view of the definition of $\ti \dist_{L,\ti t}$ and the way curvature changes
under scaling.

For ease of reading we will denote the solution   $\ti g(x,\ti t)$ by
$g(x,t)$.  Also $\ti t_0$ will be denoted by $t_0$,  $\ti t$ by $t$,
and  $\ti{\dist}_{L,\ti
  t}$ by $\dist_t$ ($L$ is now fixed).
Then we now have 
\begin{itemize}
\item[(a)] $\vol({{}^{0} B_{s}(x)}) \geq  v_0 s^3$  for all 
${}^{0} B_{s}(x) \subseteq {}^{0} B_{L}(p_0)$ and  
\item[(b)] $\curlR(g(t))(x) \dist_t^2(x) \geq -\si_0 N^2$
for all  $x \in \overline{ {{}^{t} B}_{L-N\sqrt{t}}(p_0)}$  which satisfy 
$ t \leq t_0 \leq 1$, 
\item[(c)] 
$\curlR(g(t_0))(z_0)(\omega, \omega) = -\frac{\si_0 N^2}{
  \dist_{t_0}^2(z_0)} = -\si_0$  for the two form $\omega = L^2
\psi$ which has length one with respect to (the new) $g(t_0)$,
\item[(d)] $\curlR(g(0))(x) \geq -\frac{1}{L^2} \geq -\frac{1}{N^2}$
  on  ${}^{0} B_{L}(p_0)$.
\end{itemize}
The first two inequalities are scale invariant (if they hold for some
solution, then they hold for a
scaling of the  Ricci flow after adjusting the delimiters, assuming
that we have defined the new $\dist_t$ for the scaled solution as in
\eqref{newzero}: cf. \eqref{newone} and \eqref{newthree}). In the  third
equality we used the fact that (after scaling) ${\dist}_{
  t_0}^2(z_0)  = N^2$ (see \eqref{newtwo} ): after scaling, we also have
$\curlR(g(t_0))(z_0)( \omega,  \omega) \dist^2_{t_0}(z_0) = -\si_0
N^2$
 and hence $\curlR(g(t_0))(z_0) ( \omega, \omega) = -\frac{\si_0
  N^2}{\dist^2_{t_0}(z_0)} = -\si_0$.
The last inequality, (d),  follows since we are scaling by $L^2$ and we showed
$L \geq N$. For all $0\leq t\leq t_0$we have
\begin{eqnarray*}
\{ x \in \overline{{}^{t} B_{L}(p_0)} \  |  \ \dist_t^2(x) \geq N^2t
\} = 
\overline{{}^tB_{L-N\sqrt{t}}(p_0)}. 
\end{eqnarray*}
Let $x_0 \in \overline{ {{}^0 B}_{ L -N + N\be_0}(p_0)}$ be an arbitrary
point.
Clearly ${}^0 B_{\be_0 N}(x_0) \subseteq  \overline{{}^0
  B_{L}(p_0)}$, in view of the triangle inequality
 (we are using that $\be_0 \leq 1/2$, which we always assume).
\\
The rest of the proof is broken up into three steps.
\\
{\bf Step 1 } For an arbitrary $x_0 \in \overline{{{}^0 B}_{ L -N(1
   -\be_0)}(p_0)} = \overline{{{}^0 B}_{ L -N +
   N\be_0}(p_0)} $ we show that ${}^t B_{\be_0 N}(x_0) \subseteq
\overline{  {{}^{t} B}_{L-N\sqrt{t}}(p_0) } 
$ for all $t \leq 1- 10 \be_0 $  as long as $t\leq
t_0$,  and $N=N(v_0) >0 $ is sufficiently large. Using the estimates of Theorem \ref{thm22new}, we will then see that this
guarantees that $|\Riem(\cdot,t)|\leq \frac{c_0(v_0)}{t}$ on  ${{}^t
  B}_{\frac{1}{\be_0}}(x_0)$, and that $\vol({{}^t B}_1(x_0)) \geq \ti
v _0(v_0)>0$  for all $t \leq  \min(1 - 10 \be_0,t_0)$ , for some
constants $c_0(v_0),\ti
v_0(v_0) >0$.
\hfill\break
{\bf Now we present the details of Step 1}.\\
Let $x_0 \in \overline{{{}^0 B}_{ L -N + N\be_0}(p_0)}$ be arbitrary.
 We know that  $x_0  \in
\overline{  {{}^{t} B}_{L-N\sqrt{t}}(p_0) } $ is valid, if and only if
\begin{eqnarray}
d_t(x_0,p_0) \leq L - N\sqrt{t}. \label{disty}
\end{eqnarray}
In the following we only consider $t$ such that $0\leq t \leq t_0$,
where $t_0\leq 1$ was defined at the beginning of the proof.
Hence, starting at time zero and going forward to time $t$, as long as $x_0 \in
\overline{  {{}^{t} B}_{L-N\sqrt{t}}(p_0) } $ remains valid, any length minimising geodesic (with respect to the
metric at time $t$) from
$p_0$ to $x_0$ must also completely lie  in
$\overline{{}^tB_{L-N\sqrt{t}}(p_0)} $. 
At all points $y$ on such a geodesic we have 
\begin{eqnarray*}
\curlR(g(t))(y) \dist_t^2(y) \geq -\si_0 N^2, 
\end{eqnarray*}
in view of $(b)$.
Using $\dist_t(y) = L- d_t(y,p_0)$,  we see that this means
\begin{eqnarray*}
\curlR(g(t))(y)  && \geq -\frac{\si_0 N^2}{\dist^2_t(y)} \cr
 && = -\frac{\si_0 N^2}{(L - d_t(y,p_0))^2}
\end{eqnarray*}
for such $y$.

Using this inequality in the
evolution equation for the distance (Lemma 17.3 of \cite{HaFour})  we
see (as long as $x_0 \in \overline{  {{}^{t} B}_{L-N\sqrt{t}}(p_0) } $ remains valid)
\begin{eqnarray*}
\partt d_t(x_0,p_0) && \leq \sup_{\ga \in X_t} \int_0^{d_t(x_0,p_0)}
-2\Ricci(\ga(s),t) ds \cr
&& \leq \sup_{\ga \in X_t} \int_0^{d_t(x_0,p_0)} \frac{\si_0 20 N^2}{(L-s)^2}ds\cr
&&= \frac{20 \si_0 N^2}{L-s}|_{s=0}^{s = d_t(x_0,p_0)}\cr
&& = \frac{ 20 \si_0 N^2}{L- d_t(x_0,p_0)}  -   \frac{20 \si_0 N^2}{L}\cr
&&\leq \frac{20 \si_0 N^2}{L- d_t(x_0,p_0)} \cr
&& \leq \frac{ 20 \si_0 N^2}{N\sqrt{t}} \cr
&& =\frac{20 \si_0 N}{ \sqrt{t}} \cr
&& = \frac{20\be_0^4 N}{\sqrt t}
\end{eqnarray*}
where $X_t$ is the set of distance minimising geodesics from $p_0$ to
$x_0$ at time $t$ (that is, measured with respect to the metric $g(t)$
at time $t$)
parameterised by arclength, and we have used inequality \eqref{disty}.
Here,  $\partt$ is to be understood in the sense of forward difference
quotients: see chapter 17 of \cite{HaFour} for more details.
Integrating in time from $0$ to $t$, we see that this means 
\begin{eqnarray}\label{distyy}
d_t(x_0,p_0) && \leq d_0(x_0,p_0) +  40\be_0^4 Nt^{1/2} \cr
&& \leq d_0(x_0,p_0) +  \be_0^2 N \cr
 && \leq (L - N + N\be_0) + \be_0^2N \cr  
&& \leq L- (1-2\be_0)N 
\end{eqnarray}
 for all $t \leq t_0 (\leq 1)$ as long as  $x_0 \in
\overline{  {{}^{t} B}_{L-N\sqrt{t}}(p_0) } $  remains true, where we
have used  that
 $x_0 \in \overline{{{}^0 B}_{ L -N + N\be_0}(p_0)}$ (and $\be^2_0
 \leq \frac{1}{40}$ which we will always assume). Restrict now only to $t\leq 1-10\be_0$
in the above argument.
Using the fact that $-(1-2\be_0) \leq -\sqrt{t} -\be_0$  for such
times \footnote{Note that $-(1-2\be_0) \leq -\sqrt{t}- \be_0$ if and only if
$ \sqrt t \leq 1- 3 \be_0$ if and only if 
$ t \leq (1-3\be_0)^2 = 1 -6\be_0 + 9 \be^2_0$ and hence
 $t \leq 1- 10\be_0 $ implies $t \leq 1- 6\be_0 + 9\be^2_0$ implies 
$-(1-2\be_0) \leq -\sqrt{t}-\be_0$ 
as claimed},
and  inequality  \eqref{distyy} , we see that
\begin{eqnarray}
d_t(x_0,p_0)  && \leq L- (1-2\be_0)N \cr
&& \leq L- N \sqrt t  - \be_0 N
\label{useful}
\end{eqnarray}
 for all $t \leq \min(t_0,1-10\be_0)$ as long as  $x_0 \in
\overline{  {{}^{t} B}_{L-N\sqrt{t}}(p_0) } $  remains true, and hence
$x_0 \in \overline{  {{}^{t} B}_{L-N\sqrt{t}}(p_0) } $ will not be violated as long as $t \leq
\min(t_0,1-10\be_0)$.
Furthermore, the triangle inequality combined with \eqref{useful}
implies
 $${}^t B_{\be_0 N}(x_0) \subseteq
\overline{  {{}^{t} B}_{L-N\sqrt{t}}(p_0) } $$ will not be violated 
as long as $t \leq \min(1-10\be_0,t_0)$:  $y \in {}^t B_{\be_0 N}(x_0)$
implies
\begin{eqnarray*}
 d_t(y,p_0) && \leq d_t(y,x_0) + d_t(x_0,p_0) \leq \be_0 N + L- N
\sqrt{t} - \be_0 N  \\
&& = L- N \sqrt{t} 
\end{eqnarray*}
for such $t$, in view of the inequality \eqref{useful}

The lower bound on the curvature, (b), is therefore
valid on  ${}^t B_{\be_0 N}(x_0)$ as long as $t \leq 1-10\be_0$ and $
t\leq t_0$, and
hence, for $x$ in the ball of half the radius, $x \in {{}^t B}_{\frac{\be_0 N}{2}}(x_0)$, we have
\begin{eqnarray}
\curlR(g(t))(x) &&\geq
-\frac{\si_0 N^2}{\dist_t^2(x)}\cr
&&\geq  -\frac{\si_0 N^2}{ (N\sqrt{t} +\frac{N\be_0}{2})^2}\cr
&&\geq - \frac{4\si_0}{\be^2_0}\cr
&& = -4\be_0^2  \ (\geq -1)
\label{lowerbound}
\end{eqnarray}
($\si_0>0$ was chosen  to be  $ \si_0 = \be^4_0 $) for all $t \leq
1-10\be_0,$ $t \leq t_0,$ in view of the fact that 
\begin{eqnarray*}
\dist_t(x) && = (L - d_t(x,p_0)) \cr
&&  \geq (L- d_t(x,x_0) - d_t(x_0,p_0)) \cr 
&& \geq ( L - \frac{\be_0 N}{2} -d_t(x_0,p_0) ) \cr 
&& \geq ( L - \frac{\be_0 N} {2} -L + N \sqrt{t} +  \be_0 N ) \cr
&& =  (\frac{\be_0 N}{2} + N \sqrt{t} )
\end{eqnarray*} for $x \in {{}^t B}_{\frac{\be_0 N}{2}}(x_0)$, which
follows from  the definition of
$\dist_t(x)$, the triangle inequality and inequality \eqref{useful}. 
Choosing $V=16$, $\al = 1/2$, $r = \frac{2}{\be_0}$ in Theorem
\ref{thm22new} (this gives us $-\frac{V}{r^2} = -4 (\be^2_0)$), we
see that
\begin{eqnarray*}
&&|\Riem(\cdot,t)| \leq \frac{c_0(v_0)}{t}   \mbox  { on } {}^t
B_{\frac{1}{\be_0}}(x_0)  , \  \mbox{ and } \  \cr
&& \vol( {{}^t B}_s(x_0)) \geq \ti v_0(v_0)s^3 , \ \mbox{ for all } \
s\leq \frac{1}{\beta_0}, \cr
&& \ \ \ \ \mbox{ for all } \ t \leq \min(1-10\be_0,t_0,
\frac{m(v_0)}{\be_0^2} ), 
\end{eqnarray*}
since $N$ is large enough: we are assuming that 
 $\frac{N \be_0}{2} \geq \frac{2}{
   \be_0}$, and so 
${{}^t B}_{\frac{2}{\be_0}}(x_0)   \subseteq {{}^t B}_{\frac{\be_0 N}{2}}(x_0)  $ and so the conditions of 
Theorem \ref{thm22new} are satisfied in view of \eqref{lowerbound} and
(a). Note that the dependancy of the constants $c_0,m_0,\ti v_0$ from 
Theorem \ref{thm22new} is  $c_0 =c_0(n,v_0,\al,V) = 
c_0(3,v_0,1/2,16) = c_0(v_0)$,  $m_0 = m_0(n,v_0,\al,V)=
m_0(v_0)>0$, and $\ti v_0=\ti v_0(n, v_0,V) = \ti v_0(v_0)>0$ 
and $c_0,m_0, \ti v_0$ {\bf do not} depend on $N$ or $\si_0$:
decreasing $\si_0$ or increasing $N$ will not affect
$c_0(v_0),m_0(v_0)$ or $\ti v_0(v_0)$. We assume that
$\be_0^2 = \si_0^{1/2}  \leq m_0(v_0)$, so that 
\begin{eqnarray}\label{curvest}
&&|\Riem(\cdot,t)| \leq \frac{c_0(v_0)}{t}   \mbox  { on } {}^t
B_{\frac{1}{\be_0}}(x_0)  , \  \mbox{ and } \  \cr
&& \vol( {{}^t B}_s(x_0)) \geq \ti v_0(v_0)s^3 , \ \mbox{ for all } \
s\leq \frac{1}{\beta_0}, \cr
&& \ \ \ \ \mbox{ for all } \ t \leq \min(1-10\be_0,t_0)
\end{eqnarray}
in view of the fact that $t_0 \leq 1$.
Let $\ep(3), \de(3)$ be the constants appearing in the second
Pseudolocality Theorem of G. Perelman, Theorem 10.3 in \cite{Per}, in
the case that $n=3$ (as it is here).
From Lemma \ref{littletechnical}
  with $n=3$, $\ell= 2c_0(v_0)$, $\ti v_0 =\ti v_0(v_0),$ $\de =  \de(3),$ and $T_0 = \min(
  1-10\be_0,t_0),$ 
we see that there exists an $R_0 =R_0(3,\ell,\ti v_0,\de) =
R_0(3,2c_0(v_0),\ti v_0(v_0), \de(3)) = R_0(v_0) >0$ such that
\begin{eqnarray}
|\Riem|(\cdot,t) \leq \frac{1}{R_0^2} \mbox { on } {{}^{t} B}_{R_0}(x_0)\label{curvestshi2}\\
\vol({^{t} B}_{R_0}(x_0)) \geq \omega_3 (1-\de) R_0^3\label{localinfo2}
\end{eqnarray}
for all $ \frac{1}{2} \leq t \leq \min( 1-10\be_0,t_0)$.\\
It is helpful to notice the following at this stage: at the moment we have the freedom to choose $\be_0= \si_0^{1/4}>0$ as
small as we
like. Decreasing $\si_0$ (and hence $\be_0$) or increasing $N$ will not change the
constant $c_0(v_0)$ we obtained above, and hence will not change $R_0(v_0) = R_0(3,c_0(v_0),v_0,\de(3))  $  we obtained above.
\hfill\break
{\bf This finishes Step 1}.
\\
{\bf Step 2}\\
In Step 2 we use the estimates from Step 1 and the (second) 
Pseudolocality result of G. Perelman to show that 
 $|\Riem(\cdot,t)|\leq \frac{\ti c(v_0)}{t}$ on $ {{}^t B}_{r_0}(x_0)$
for all $0\leq t \leq t_0$, for some small $r_0 = r_0(v_0) >0$ and some large $\ti c(v_0)$,  if
$\si_0(v_0)$ is chosen sufficiently small, and $x_0$ is an arbitrary
point in $\overline{{{}^0 B}_{ L -N(1
   -\be_0)}(p_0)}$. That is, the estimate of Step 1 for $0 \leq t  \leq  \min(1 -
10\be_0,t_0)$, can be extended to $0 \leq t \leq t_0$ (after changing $c_0(v_0)$ to a larger constant 
$\ti c(v_0)$) on a small time dependent
neighbourhood of $x_0$ if necessary: it is only
necessary to do this if $ t_0 > 1 - 10\be_0$.
Using these estimates, we then show that 
$|\Riem(\cdot,t)|\leq \frac{\ti c(v_0)}{t}$ on  the very large ball ${{}^t B}_{ \frac{
    \be_0 N}{64} }(y_0) $  for $ 0 \leq t \leq t_0$, for all $y_0 \in  \overline{{{}^0 B}_{ L -N(1-\frac{3}{4}\be_0)}(p_0)}  $, and hence, using Theorem 5.1 of \cite{SimSmoo} combined
with (a) and (d), we see that
$\curlR \geq -\frac{1 }{N}$ on  ${{}^t B}_{ \frac{ \sqrt{N}}{2} }(y_0) $  for all $ 0\leq t \leq t_0$ for all $y_0
\in \overline{{{}^0 B}_{ L -N(1-\frac{3}{4}\be_0)}(p_0)}$.
\hfill\break
{\bf Now we present the details of Step 2}.
\\
Let $\ep(3), \de(3)$ be the constants introduced in Step 1: $\ep(3),
\de(3)$ are the constants which appear in the second
Pseudolocality Theorem of G. Perelman, Theorem 10.3 in \cite{Per}, in
the case that $n=3$ (as it is here).
Assume $t_0>
1-10\be_0$. We know $t_0 \leq 1$.
Using  Theorem 10.3 of \cite{Per}, combined with the estimates
\eqref{localinfo2},  and \eqref{curvestshi2}, we  get 
\begin{eqnarray*}
|\Riem(x,t)| && \leq
\frac{1}{(\ep(3) R_0)^2} \mbox{ for all }  \ x \in  {}^t
B_{\ep(3) R_0} (x_0),  \cr
&& \  \ \ \  \ \mbox{ for all }  \ t \in [1-10\be_0,t_0) \cap [1-10\be_0 ,
1-10\be_0 +  R_0^2(v_0) \ep^2(3)).
\end{eqnarray*}
If we choose $\be_0 = \be_0(v_0) = \si_0^{1/4}(v_0)>0$ small enough, so that 
$R_0^2(v_0) \ep^2(3) >  10\be_0,$ then we have
$1-10\be_0 + R_0^2(v_0) \ep^2(3) >1$ und hence 
$[1-10\be_0,t_0) \cap [1-10\be_0 ,
1-10\be_0 +  R_0^2(v_0) \ep^2(3)) = [1-10\be_0,t_0)$, since $t_0\leq 1$.
This means that 
\begin{eqnarray*}
|\Riem(x,t)| \leq
\frac{1}{(\ep(3) R_0)^2} \mbox{ for all }  \ x \in  {}^t
B_{\ep(3) R_0} (x_0), \ \   t \in [1-10\be_0,t_0)
\end{eqnarray*}
Combining this
with \eqref{curvest} we see that
\begin{eqnarray}
|\Riem(x,t)| \leq \frac{\ti c(v_0)}{t}  
\mbox{ for all }  x \in {{}^t B}_{r_0}(x_0) ,\ \ 0\leq t \leq t_0 \label{curvest2}
\end{eqnarray}
for some small $r_0(v_0)= \ep(3) R_0(v_0) >0$ for all $x_0 \in \overline{{{}^0 B}_{ L -N + N\be_0}(p_0)}$, where $\ti c(v_0) = \max( \frac{1}{\ep^2(3) R_0^2(v_0)}, c_0(v_0)).$
That is, we have extended the estimates \eqref{curvest} up to time
$t_0$ on a {\it small} time dependent ball of fixed radius with middle point
$x_0$, for arbitrary $x_0 \in \overline{{{}^0 B}_{ L -N +
    N\be_0}(p_0)}$.\\
This is the point where we determine
$\be_0(v_0)=\si_0^{1/4}(v_0)$: it is now fixed for the rest of the
argument.
We stress the following point. The constants $\ti c(v_0)$ from \eqref{curvest2} and the constant
$\si_0(v_0)$, and hence $\be_0(v_0) = (\si_0(v_0))^{1/4}$) are now fixed. They  only depend on $v_0>0$. They do not
depend on $N$: we still have the freedom to choose $N$ as large as we
like without changing $\ti c(v_0),R_0(v_0),c_0(v_0), \be_0(v_0)$, or $\si_0(v_0)$. 
In fact decreasing $\si(v_0)$ (and hence $\be_0(v_0) =
(\si_0(v_0))^{1/4})$ and increasing $N$ would not change $\ti c(v_0),R_0(v_0)$
or $c_0(v_0)$ from above, in view of the definitions of $c_0(v_0),R_0(v_0)$ and $\ti c(v_0)$.

In order to get estimates on a {\it large} time dependent ball, we
restrict to points $y_0 $ in $\overline{{{}^0 B}_{ L
    -N(1-\frac{3}{4}\be_0)}(p_0)} \subseteq \overline{{{}^0 B}_{ L -N +
    N\be_0}(p_0)}$ and use the estimates that we have just obtained.
Let $y_0 $ in $\overline{{{}^0 B}_{ L
    -N(1-\frac{3}{4}\be_0)}(p_0)}$ be arbitrary.
Let $z \in \boundary( {{}^{0}
  B}_{\frac{\be_0N}{32}}(y_0)) $. Then, using the estimate
\eqref{curvest2}, we see that $ |\Riem(\cdot,t)| \leq \frac{\ti c(v_0)}{t}$ on ${{}^t B}_{r_0}(z)$ for
all $0 \leq t \leq t_0$ for some small fixed $r_0(v_0)>0$, and the same is true on ${{}^t
  B}_{r_0}(y_0)$, since $z,y_0 \in \overline{{{}^0 B}_{ L -N +
    N\be_0}(p_0)}$ due to the triangle inequality. Hence, using  Lemma \ref{Littlericci}, 
 we see that $d_t(y_0,z)
\geq d_0(y_0,z) -a_0(v_0) = \frac{\be_0N}{32} -a_0(v_0) >
\frac{\be_0N}{64}$ for all $t  \in [0,t_0]$, where  $a_0(v_0) = a(3,r_0(v_0), \ti c(v_0) )$ is the constant coming
 from Lemma \ref{Littlericci} (with $n=3$, $\ell=\ti c(v_0)$ and $j_0 = r_0(v_0)$ there), and we assume without loss of generality that $ \frac{N \be_0}{64} > a_0(v_0)$. Hence, since $z \in \boundary( {{}^{0}
  B}_{\frac{\be_0N}{32}}(y_0)) $ was arbitrary, it must be that
  $\overline{{{}^t B}_{\frac{\be_0 N}{64} }(y_0)} \subseteq  {{}^{0}
  B}_{\frac{\be_0N}{32}}(y_0)  $ remains true for all $0 \leq t \leq
t_0$: if there exists a first $t \in [0,t_0]$ were $\overline{{{}^t B}_{\frac{\be_0 N}{64} }(y_0)} \subseteq  {{}^{0}
  B}_{\frac{\be_0N}{32}}(y_0)  $ is violated, then there must  exist
a point $z \in \boundary( {{}^{0}
  B}_{\frac{\be_0N}{32}}(y_0)) \cap \overline{   {{}^t B}_{\frac{\be_0
      N}{64} }(y_0) }$ for this $t$, which contradicts  the inequality $d_t(y_0,z) >
  \frac{\be_0N}{64}$ that we just showed.\\
This implies that
\begin{eqnarray}
&& |\Riem(x,t)| \leq \frac{\ti c(v_0)}{t} \label{curvfinal1}
\end{eqnarray}
for all $x \in {{}^t B}_{\frac{\be_0 N}{64} }(y_0) $ for all $y_0 \in
\overline{{{}^0 B}_{ L -N(1-\frac{3}{4}\be_0)}(p_0)}$ in view of \eqref{curvest2}
and the fact that $ {{}^t B}_{\frac{\be_0 N}{64} }(y_0)
\subseteq \overline{ {{}^0 B}_{\frac{\be_0 N}{32} }(y_0)} \subseteq
\overline{{{}^0 B}_{ L -N + N\be_0}(p_0)} $ remains true for all $0 \leq t \leq t_0$.
Let $r =\sqrt{N}$. Then we have 
\begin{eqnarray*}
\curlR && \geq -\frac 1 {L^2} \cr
&& \geq -\frac 1 {N^2} \cr
&& = -\frac{ 1}{\ti c 400}( \frac {\ti c 400}{N^2}) \cr
&& \geq -\frac{ 1}{\ti c 400}( \frac{1}{N}) \cr
&& = -\frac{ 1}{400 \ti c r^2}
\end{eqnarray*}
at time zero on ${{}^{0}B}_r(y_0)$ since, without loss of generality, $\ti
c(v_0) 400 \leq N$ and $\sqrt{N} \leq \frac{\be_0 N}{64}$ which tells us that ${{}^{0}B}_r(y_0) =  {{}^{0}B}_
{\sqrt{N}}(y_0) \subseteq {{}^0 B}_{\frac{\be_0 N}{64} }(y_0)\subseteq
\overline{{{}^0 B}_{ L -N + N\be_0}(p_0)} \subseteq {{}^0B}_L(p_0)$.
Now using Theorem 5.1  of the  paper \cite{SimSmoo} , we see that
\begin{eqnarray}
\curlR(x,t) \geq - \frac{1}{r^2} = -\frac{1 }{ N} \label{contra}
\end{eqnarray}
for all $x \in {{}^t B}_{\frac r 2} (y_0) = 
{{}^t B}_{\frac{\sqrt{N}
}{2}   }(y_0),$ for all $y_0 \in
\overline{{{}^0 B}_{ L -N(1-\frac{3}{4}\be_0)}(p_0)},$
for all $0\leq t \leq \min(\hat \de^2(v_0) r^2,t_0)
=t_0$, where $\hat \de(v_0)= \de(v_0,\ti c(v_0), \frac{1}{2}) $ is the constant coming from Theorem 5.1 of \cite{SimSmoo},
since without loss of generality $\hat \de^2(v_0) r^2 = \hat \de^2(v_0) N \geq
1$, and
$|\Riem(\cdot,t)| \leq \frac{\ti c(v_0)}{t}$
on  ${{}^t B}_{\sqrt{N}  }(y_0)$ in view of equation \eqref{curvfinal1},
since  ${{}^t B}_{\sqrt{N}  }(y_0) \subseteq
{{}^t B}_{\frac{\be_0 N}{64} }(y_0),$ where here we are using again the fact
that $\frac{\be_0 N}{32} \geq \sqrt{N}$.  To apply Theorem 5.1 of
\cite{SimSmoo} here, scale so that $r=1$ and then scale the conclusion
of the Theorem 5.1 of \cite{SimSmoo} back to the case $r=\sqrt{N}$, to
obtain the estimate claimed here (the $N$ appearing in Theorem 5.1 of \cite{SimSmoo}  is $N :=\ti c$, where $\ti
c$ is the $\ti c$ appearing in the current proof: the  $N$ of the  theorem we are
proving has nothing to with the $N$ of Theorem 5.1 of \cite{SimSmoo}) .                                    
\\
{\bf This finishes Step 2}
\\
{\bf Step 3}\\
In Step 3 we use the estimates from above to show that
the contradiction point $z_0$ from the beginning of this argument 
must in fact be in $ \overline{{{}^0 B}_{ L -N(1-\frac{3}{4}\be_0)}(p_0)}$. This along with the fact
that $\curlR(z_0)(t_0)  \geq -\frac{1 }{N}$ for such points (Step 2) and
$(c)$ leads to a contradiction if $N=N(v_0)>0$ is large enough.\\
{\bf Now we present the details of Step 3}.
\\
Consider once again elements $y_0 \in \overline{{{}^0 B}_{ L -N(1-\frac{3}{4}\be_0)}(p_0)} \subseteq \overline{{{}^0 B}_{ L -N + N\be_0}(p_0)}$, where 
 $\be_0= \be_0(v_0)$  is as defined in the Steps 1,2 above.

The estimate  \eqref{curvfinal1} above combined with Lemma
\ref{Littlericci} shows that for \\$y$ in $\boundary( {{}^0 B}_{ L -N(1-\frac{3}{4}\be_0)}(p_0))$, that
is for $y$ with $d_0(y,p_0) =  L -N + \frac{3}{4} \be_0N$,
we have 
\begin{eqnarray*}
d_t(y,p_0) \geq d_0(y,p_0)  -a_1 (v_0) && = L-N  + \frac {3}{4} \be_0 N
-a_1(v_0) \cr
&& \geq L-N  + \frac {N\be_0}{ 2 } 
\end{eqnarray*} for all
$t \leq t_0$, since without loss of generality, 
$\frac{\be_0 N}{4} \geq a_1(v_0)+1$, where we used that $p_0$
is also contained in $\overline{{{}^0 B}_{ L -N(1-\frac{3}{4}\be_0)}(p_0)}$, and 
$a_1(v_0) = a(3,1, \ti c(v_0))$ is the constant from Lemma
\ref{Littlericci} (with $n=3$, $\ell=\ti c(v_0)$ and $j_0 = 1$ there). This implies that $$
d_t(y,p_0) \geq L-N  + \frac{1}{2}\be_0 N$$ for all 
$y \in (\ \overline{{{}^0 B}_{ L -N(1-\frac{3}{4}\be_0)}(p_0)} \ )^c,$ for all
$0\leq t \leq t_0$ (every length minimising geodesic with respect to $g(t)$ which joins $p_0$ to
$y$, where $y$ is outside of $\overline{{{}^0 B}_{ L
    -N(1-\frac{3}{4}\be_0)}(p_0)}$, must intersect $\boundary( {{}^0 B}_{ L -N(1-\frac{3}{4}\be_0)}(p_0))$), and
hence $L- d_t(y,p_0) \leq N-
\frac{1}{2}\be_0 N$, which means $\dist_t(y)  = \max(0,L- d_t(y,p_0))  \leq \max(0,N-
\frac{1}{2}\be_0 N) = N-
\frac{1}{2}\be_0 N$, for all
$t \leq t_0$  for such points $y \in (\ \overline{{{}^0 B}_{ L
    -N(1-\frac{3}{4}\be_0)}(p_0)}\ )^c$.
In particular $z_0$ is not in $(\ \overline{{{}^0 B}_{ L
    -N(1-\frac{3}{4}\be_0)}(p_0)}\ )^c$  since $ \dist_{t_0}(z_0) =
N$ (we scaled so that this is true),
and hence $z_0 \in \overline{{{}^0 B}_{ L -N(1-\frac{3}{4}\be_0)}(p_0)}$.
Now using the fact (inequality \eqref{contra}) that 
$\curlR(\cdot,t) \geq-\frac{1}{  N }$ on  ${}^t B_{\frac{\sqrt{N}}{2}
}(z_0) $ for all $t \in [0,t_0]$ we obtain a contradiction to (c), 
$\curlR(z_0,t_0)( \omega , \omega) = -\si_0$,  if $N$ is chosen large
enough, for example $N \geq \frac{2}{\si_0}$.
\\
{\bf This finishes Step 3 and the proof of the Theorem}.
\end{proof}
\begin{remark}\label{constants}

The following important constants appeared, in this order, in the
proof of the theorem:
$c_0(v_0) = c_0(3,v_0,1/2,16), m_0(v_0) = m_0(3,v_0,1/2,16)$, where
$ c_0(3,v_0,1/2,16), m_0(3,v_0,1/2,16)$ are the
constants coming from Theorem \ref{thm22new},\\
$R_0(v_0)= R_0(3,2c_0(v_0),\ti v_0(v_0),\de(3))$, where $R_0(3,
2c_0(v_0),\ti v_0(v_0),\de(3))$ is the constant coming from
Theorem \ref{littletechnical} and $\de(3)$ and $\ep(3)$ are the
constants coming from G. Perelman's Pseudolocality Theorem 10.3 in
\cite{Per}), $r_0(v_0) = \ep(3)R_0(v_0)$, \\ 
$\ti c(v_0) = \max( \frac{1}{\ep^2(3)R_0^2(v_0)}, c_0(v_0))$,
$a_0(v_0) = a(3,r_0(v_0), \ti c(v_0))$, 
where $a(\cdot,\cdot,\cdot)$ is the constant coming from Lemma
\ref{Littlericci}, $\hat \de(v_0) := \de(v_0, \ti c(v_0),\frac{1}{2})$, where
$\de(\cdot,\cdot,\cdot)$ is the constant coming from Theorem 5.1 of \cite{SimSmoo},
$a_1(v_0) = a(3,1,\ti c(v_0))$,
where $a(\cdot,\cdot,\cdot)$ is the constant coming from Lemma
\ref{Littlericci}.  
The following assumptions on the largeness  of $N(v_0)$ and smallness of
$\si_0(v_0)$ and $\be_0(v_0)$ were used in the proof: 
$\be_0^2 \leq m_0(v_0),$ $\frac{N \be_0}{2} \geq \frac{2}{
   \be_0},$  $\frac{\be_0N}{4} \geq a_0(v_0),$  $1-10\be_0 + \ep(3)
 R_0^2(v_0)>0.$ This fixes the constants $\be_0$ and $\si_0$. We
 further require $N \geq a_0(v_0) \frac{64}{\be_0},$ $N \geq 400 \ti
 c(v_0)$, $\sqrt{N}\be_0 \geq 64$, $N \hat \de^2(v_0) \geq 1,$ $\be_0
 N \geq 4 a_1(v_0)+1,$ $N \geq \frac{2}{\si}.$
This determines $N$. 
Hence it is possible to determine $\si$ (and hence $\beta_0 =
\si_0^{1/4}$) and $N$ in the first line of
the above given proof.
\end{remark}

\section{Proof of Theorem \ref{threed}}\label{secthreed}

{\it Proof of Theorem \ref{threed}}.
Scale so that $r=1$. 
Then we have due to the Bishop-Gromov volume comparison theorem,
\begin{itemize}
\item[(i)] $\vol({{}^{0} B_{r}(x)}) \geq v(\al,v_0)r^3$ for all ${{}^0 B}_r(x) \subseteq {{}^0 B}_{1-\al}(x_0)$,
\item $\curlR(g(0)) \geq -  \frac{1}{(1-\al)^2}$ on  ${{}^{0} B}_{1-\al}(x_0)$.
\end{itemize} 
See the Appendix in Version 1 of 'Local Smoothing Results for the Ricci flow in
dimensions two and three', M. Simon, arXiv:1209.4274v1 for a
reference: since the points $x$ are not at the centre of the ball
${{}^0 B}_{1-\al}(x_0)$, $v(\al,v_0)$ can depend on $\al$.
Hence, Theorem \ref{threed2} is valid for $r=1-\al$, and we find that
there exists $K = K(v(\al,v_0)) = K(\al,v_0) $ and $\si_0 =
\si_0(v(\al,v_0)) = \si_0(\al,v_0) > 0$ , 
such that
$$\curlR(g(t))(x) (1-\al-d_t(x,x_0) )^2 > -  K^2  $$
for all  $x \in {}^{t} B_{1-\al}(x_0) $  which satisfy 
$ (1-\al-d_t(x,x_0))^2 \geq \frac{K^2}{\si_0}t$ and $ t \leq 
\frac{ \si_0(1-\al)^2}{K^2} $ and $t \in [0,T)$.
In particular, $\curlR(g(t))(x) > - \frac{ K^2}{\al^2} (*) $
for all  $x \in {}^{t} B_{1-2\al}(x_0) $ for all  $ t \leq 
\min( \frac{\si_0 \al^2 }{K^2}, \frac{ \si_0 (1-\al)^2}{ K^2})$ with $t \in [0,T)$.
Now we may use Theorem \ref{thm22new}, with $r= 1-2\al$ to further conclude that,
$|\Riem(x,t)| \leq \frac{c_0(\al,v_0)}{t} (**) $ for all $x \in {}^{t} B_{1-4\al}(x_0) ,$
$t \leq S(\al,v_0)$. Choosing
$\al = 1/10$ in the above argument, we see that 
we also get  $\vol( {{}^{t}B}_{1}(x_0 ) )  \geq \ti v_0(v_0)
(4/5)^3 (***)$ for all $t \leq S(\al,v_0)$, for some $\ti v_0 (v_0) >0$.  The estimates (*),(**) and (***) are the desired estimates.\hfill
$\Box$

\begin{appendix}
\section{Dimension of Gromov-Hausdorff limits of collapsing and
  non-collapsing spaces}
\end{appendix}
We explain why some certain well known properties of collapsing, respectively
non-collapsing manifolds, with curvature bounded from below
hold.  These properties follow from
the results contained in \cite{BGP} (see also \cite{BBI}).
Note that the definition of {\it Alexandrov space with curvature bounded
from below}  in \cite{BGP} (Definition 2.3)  and \cite{BBI} (Proposition 10.1.1) agree.
\begin{theorem}\label{appendixthm}
Let $(B_1(p_i),g_i)$, $(B_1(q_i),h_i)$, $i \in \N$  be balls whose
closure is compactly
contained in smooth
Riemannian manifolds without boundary of dimension $n \in\N$ fixed. Assume that
$\sec \geq -V$ on these balls and that \\
$d_{GH}( (B_1(p_i),g_i), (B_1(q_i),h_i) ) \to 0$ as  $i \to \infty$,
and  $\vol( (B_1(p_i),g_i)) \geq v_0>0$  for all $i
\in \N$.
Then  it cannot be, that
 $\vol(B_1(q_i),h_i) \to 0$ as $i
\to \infty$.

\end{theorem}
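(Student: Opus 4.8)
The plan is to pass to Gromov--Hausdorff limits and compare their Hausdorff dimensions. Suppose for contradiction that $\vol(B_1(q_i),h_i)\to 0$. By the Gromov precompactness theorem for metric spaces with a uniform lower curvature bound (\cite{BGP}, see also \cite{BBI}), after passing to a subsequence (not relabelled) the pointed balls $(B_{1/2}(p_i),g_i,p_i)$ converge in the pointed Gromov--Hausdorff sense to a pointed Alexandrov space $(Y,d_Y,y_\infty)$ with curvature $\ge -V$ and Hausdorff dimension $\dim Y\le n$; here I would work with concentric balls of radius slightly less than $1$, so that all the relevant minimizing geodesics stay inside $B_1(p_i)$, where the curvature bound is assumed. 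Since $d_{GH}\big((B_1(p_i),g_i),(B_1(q_i),h_i)\big)\to 0$, the corresponding balls $(B_{1/2}(q_i),h_i,q_i)$ converge to the same limit $(Y,d_Y,y_\infty)$.

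The heart of the matter is to show $\dim Y=n$. Suppose instead $\dim Y=k\le n-1$. For each small $\delta>0$ the ball $B_{1/4}(y_\infty)$ can be covered by at most $C(k,V)\,\delta^{-k}$ balls of radius $\delta$, because balls in a $k$-dimensional Alexandrov space with a lower curvature bound satisfy the expected packing estimate; by Gromov--Hausdorff closeness the same bound (up to replacing $\delta$ by $2\delta$) controls the number of $\delta$-balls needed to cover $B_{1/4}(p_i)$ once $i$ is large. Each such $\delta$-ball in the ambient manifold has volume at most $C(n,V)\,\delta^n$ by the Bishop inequality, so $\vol(B_{1/4}(p_i),g_i)\le C(k,n,V)\,\delta^{n-k}$ for all large $i$; since $\delta>0$ is arbitrary and $n-k\ge 1$, this forces $\vol(B_{1/4}(p_i),g_i)\to 0$. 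On the other hand, the Bishop--Gromov comparison inequality gives $\vol(B_{1/4}(p_i),g_i)\ge c(n,V)\,\vol(B_1(p_i),g_i)\ge c(n,V)\,v_0>0$, a contradiction. Hence $\dim Y=n$.

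Now that the limit has full dimension, the convergence is non-collapsed and the volume-convergence theorem for Alexandrov spaces (\cite{BGP}) applies: for every $r\in(0,1/2)$ with $\mathcal H^n(\partial B_r(y_\infty))=0$ --- hence for almost every such $r$ --- one has $\vol(B_r(q_i),h_i)=\mathcal H^n(B_r(q_i),h_i)\to\mathcal H^n(B_r(y_\infty))$, and the right-hand side is strictly positive since $Y$ is an $n$-dimensional Alexandrov space. Fixing such an $r$, $\liminf_i\vol(B_1(q_i),h_i)\ge\liminf_i\vol(B_r(q_i),h_i)=\mathcal H^n(B_r(y_\infty))>0$, which contradicts the assumption $\vol(B_1(q_i),h_i)\to 0$. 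This proves the theorem.

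I expect the main obstacle to be the dimension-drop step: one must either quote the precise statement from \cite{BGP} that a drop in the dimension of the Gromov--Hausdorff limit is equivalent to the collapse of the $n$-dimensional Hausdorff measure of (unit) balls, or else make the covering/packing estimate rigorous --- justifying the bound $C(k,V)\delta^{-k}$ on covering numbers of balls in a $k$-dimensional Alexandrov space, transferring it faithfully through the Gromov--Hausdorff approximations, and handling the fact that only a local lower curvature bound is available, which is why one must pass to concentric balls of smaller radius and use pointed convergence rather than working with the unit balls directly.
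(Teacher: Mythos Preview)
Your outline is sound and reaches the same contradiction as the paper, but the two arguments diverge at the last step. Both pass to a common Gromov--Hausdorff limit $Y$ and show $\dim Y=n$; the paper does this by a packing argument on the non-collapsed side (transferring a maximal disjoint family of $R/3$-balls from the limit back into $(B_1(p_i),g_i)$ and using Bishop--Gromov to pin the rough dimension at $n$), which is essentially dual to your covering argument. The real difference is how the contradiction with $\vol(B_1(q_i),h_i)\to 0$ is drawn. You invoke the volume-convergence theorem for Alexandrov spaces to get $\vol(B_r(q_i))\to\mathcal H^n(B_r(y_\infty))>0$. The paper instead finds an $(n,\ep)$-strainer at a point of the limit, pushes it back into $(B_1(q_i),h_i)$ via the Gromov--Hausdorff approximation, and uses the strainer chart (Theorem~5.4 of \cite{BGP}, Theorem~10.8.18 of \cite{BBI}) to produce a bi-Lipschitz map from a small ball $B_{\tilde r}(\tilde z)\subset B_1(q_i)$ into $\R^n$ with bi-Lipschitz constant depending only on $n$; this gives $\vol(B_1(q_i))\ge \ep(n,r)>0$ directly.

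The trade-off: your route is shorter to state but leans on the volume-convergence theorem, and you must be careful that it applies here, since the $B_1(q_i)$ are incomplete balls with only a local curvature bound (you correctly flag this by working on smaller concentric balls, but the justification that $Y$ really is an Alexandrov space in this incomplete setting needs the local Globalisation Theorem, as the paper discusses in its appendix notes). The paper's strainer argument is more hands-on but is essentially self-contained once one has the basic strainer geometry from \cite{BGP}, and it sidesteps the measure-convergence machinery entirely. Both are valid; the paper's choice keeps the toolbox lighter.
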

\begin{proof}
Assume the theorem is false.
We know that $(B_{1}(p_i),g_i,p_i)$ and $(B_1(q_i),h_i,q_i)$ Gromov-Hausdorff converge,
after taking a subsequence, to the same space $(X=B_1(p),d,p)$ by the theorem
of M. Gromov, and that $(X,d,p)$ is an Alexandrov space (see {\it
  Notes on Alexandrov Spaces} below).
Without loss of generality, we may assume that $\sec \geq -k^2$ on the
balls we are considering, where
$k^2>0$ is as small as we like. This can be seen as follows.
Without loss of generality (renumber the indices $i$), we have 
 $\vol(B_{1/i}(q_i),h_i) \leq \vol(B_1(q_i),h_i) \leq \frac{1}{i^{n+1}}.$
The Bishop-Gromov Comparison principle implies that 
$\vol(B_{1/i}(p_i),g_i) \geq c(v_0,n)\frac {1}{i^n}.$
Scaling both Riemannian metrics by $i^2$, we have (we also call the rescaled
metrics $g_i$ and $h_i$)
$\vol(B_{1}(p_i),g_i) \geq c(v_0,n)>0$
and $\vol(B_{1}(q_i),h_i)  \leq \frac{1}{i}$ and $\sec \geq - \frac{V}{i^2}.$
So we assume $\sec \geq -k^2$ with $k > 0$ arbitrarily small.

Let $\overline{B_{a}(y)} \subseteq  B_{1-10a}(p)$ and
let 
$\{B_{R/3}(s_j) \}_{j\in \{1, \ldots,N \} }$  be any maximally  pairwise
disjoint collection of balls with  $R<<a<1/(10)$ and
centres $s_j$ in $B_{a}(y)$.
By {\it maximally pairwise disjoint}  we mean, that if we try and add a ball $B_{R/3}(z)$
to the collection, where $z \in B_{a}(y)$, then the new collection
is not pairwise disjoint. Then clearly 
$\{ B_{R}(s_j) \}_{j\in \{1, \ldots N\} }$  must cover $B_{a}(y)$.
Let $\ti s_j$ respectively $\ti p = p_i$, $\ti y$ be the corresponding points in
$ (B_1(p_i),g_i,p_i)$ which one obtains by mapping $s_j$ respectively $p$,$y$ back
to $ (B_1(p_i),g_i,p_i)$ using the Gromov-Hausdorff approximation 
$f_i:(B_1(p),d,p) \to  (B_1(p_i),g_i ,p_i)$: we write
$p_i = \ti p$, and so on, suppressing the dependence of the points on
$i$ sometimes, in order to make this explanation more readable. For $i $ large enough, 
$\{ B_{2R}(\ti s_j) \}_{j\in \{1, \ldots N\} }$ must cover $B_{a}(\ti y)$ and
$\{B_{R/4}(\ti
s_j)\}_{j\in \{ 1,\ldots ,N\}}$  must be pairwise disjoint and
contained in  $B_{2a}(\ti y) \subseteq B_1(\ti p)$.
The Bishop-Gromov volume comparison principle implies that
$\frac{c_2(v_0,a,n)}{R^n}   \leq N \leq \frac{c_1(v_0,a,n)}{R^n},$ for
some fixed $0<c_0(v_0,V,a,n),c_1(v_0,a,V,n) < \infty$ 
and hence the rough dimension of $B_{a}(y)$ (see Definition 6.2
in\cite{BGP}) must be
$n$.

This means that the Hausdorff-dimension and burst index of $B_s(p)$ is
also $n$ for all $s<1$ (see Lemma 6.4 and Definition 6.1 in \cite{BGP}).
Assume $\ep \leq \frac{1}{1000n}$  in all that follows.
Now let $z \in B_{1/4}(p)$ be a point for which there is an $(n,\ep)$
explosion (Definition 5.2 in
\cite{BGP}: an $(n,\ep)$ explosion is called an $(n,\ep)$
strainer in \cite{BBI}, see Definition 10.8.9 there).
Note that for any $\frac{1}{1000n} \geq \ep >0$ such
a point exists (see Corollary 6.7 in \cite{BGP}).
Let $(a_k,b_k)_{k \in \{1,\ldots n\}} $ be such an $(n,\ep)$ explosion
at $z$ and assume that $a_k,b_k \in B_{s}(z)$ for all $k =1,
\ldots,n$ with $s<<1$: as pointed out in \cite{BGP} (just after Definition 5.2), we can
always make this assumption, see also Proposition 10.8.12 in
\cite{BBI}.
Then there exists a small ball $B_r(z)$ such that $(a_k,b_k)_{k \in \{1,\ldots n\}}$ 
is an $(n,\ep)$ explosion at $x$ for all $x \in B_{r}(z)$ and
$(a_k,b_k)_{k \in \{1,\ldots n\}}$ 
is in $B_{s}(p) \backslash B_{2 \hat r}(z)$ where $s >>\hat r >> r>0$: distance is
continuous in $X$ and comparison angles (which are measured in $M^2(-V):=$
  hyperbolic space with curvature equal $-V$) 
change continuously as distances change continuously and stay away from
zero (see \cite{Mey}, equation (44)).
With $s >>\hat r >> r$, we mean $\frac{\hat r}{s} << 1$ and $ \frac{
  r}{\hat r} << 1.$  
Going back to $(B_1(q_i),h_i,q_i)$ with our Gromov-Hausdorff approximation, we see
(once again dropping dependence on $i$ for readability)
that  there exists a ball $B_r(\ti z) \subseteq B_{1/2}(q_i)$ and an
explosion $(\ti a_k,\ti b_k)_{k \in
  \{1,\ldots n\}} $ in  $ B_{2s}(\ti z) \backslash B_{\hat
  r}(\ti z)$  (if $i$ is large enough)
such that $(\ti a_k,\ti b_k)_{k \in
  \{1,\ldots n\}} $ is an $(n,4\ep)$  explosion at $x$ for all $x  \in B_r(\ti z)$: once again,
  this follows from the fact that angle comparisons change continuously
  as distances change continuously and stay away from zero, and
  distance changes at most by $\de(i)$, with $\de(i) \to 0$ as $i \to
  \infty$, under our Gromov-Hausdorff
  approximation.
There are no $((n+1),\ep)$ explosions in $(B_{1}(q_i),h_i,q_i)$, as the Hausdorff dimension of the
manifold (and hence the burst index) is $n$ (see Theorem 5.4 in
\cite{BGP} or  Proposition 10.8.15  in \cite{BBI}). Fix  $0<\ep(n) <<  \frac{1}{2000n}$.
But then, using Theorem 5.4 in
\cite{BGP}, see also 
 Theorem 10.8.18 in \cite{BBI}, (more explicitly, using  the
proofs thereof) we see that there is a $\ti r =\ti
r(n,r)>0$ and  a bi-Lipschitz homeomorphism from
$ f: B_{\ti r}( \ti z ) \to f( B_{\ti r}(\ti z)) \subseteq \R^n,$ where the bi-Lipschitz constant
may be estimated by $ \frac{1}{c(n)}d_i(x,y) \leq |f(x) - f(y)|  \leq
c(n) d_i(x,y)$ for some $c(n) >0$,
 and hence $\vol( B_1(q_i),h_i,q_i) \geq \ep(n,r) >0$  for $i$ large
 enough, as $r,n$ do not depend
on $i$.
This shows, that after taking a subsequence, we must have
$\vol(B_1(q_i),h_i,q_i) \geq \ep(n,r)>0$.
\end{proof}

{\bf Notes on  Alexandrov Spaces}\hfill\break

The fact that
$(B_{1}(p_i),g_i,p_i)$ and $(B_1(q_i),h_i,q_i)$ Gromov-Hausdorff converge to some metric space $(X = B_1(p),d)$ after
taking a subsequence follows from Gromov's Convergence Theorem (we apply
the theorem to the closed balls $\overline{B_{1-\frac 1 i}(p)}
\subseteq B_1(p)$ with $i \in \N$, and then take a diagonal subsequence). See
10.7.2 in \cite{BBI}. 
The limit space has the property that $\overline{B_s(p)}$ is complete
for all $0<s<1$ (by construction), and $\overline{B_s(p)}$ is compact for all $0<s<1$,
since it is also totally bounded (due to the Bishop-Gromov comparison
principle: see the argument on  the rough dimension  of
$\overline{B_a(y)}$ at the beginning of the proof above).

In order to guarantee that 
  $(X=B_1(p),d,p)$  is an {\it Alexandrov space}, a local version of the {\it Globalisation Theorem of
  Alexandrov-Toponogov-Burago-Gromov-Perelman} (Theorem 3.2 in
\cite{BGP}) is necessary, as the spaces we are considering are not
complete.
Such a local version of the theorem exists, as pointed out in Remark 3.5 in \cite{BGP}.
Proofs of the Globalisation Theorem can be found in the book \cite{AKP}  and a
similar proof, obtained independently,  is given in the paper \cite{LS}.
Examining the proofs of the Globalisation Theorem (in the case  $\sec
\geq -1$) in any of the proofs mentioned above, we see that the
proofs are local. Examining any of the proofs mentioned above, we see  that the following is
true: if $ (B_1(x_0),g)$ is compactly 
contained  in a smooth manifold, and  $\sec
\geq -1$ on $ (B_1(x_0),g)$ and $ z \in B_1(x_0)$ has $d(x_0,z) = 1-r$, then the quadruple condition (or the
hinge condition , or any of the other equivalent conditions, see 
section 2 in \cite{BGP} or 8.2.1 in \cite{AKP}, or the discussion on
page 3 of \cite{LS} to see why these conditions are equivalent)  hold
on the ball $B_{rc}(z) \subseteq  B_1(x_0)$ for some fixed constant
$0< c<< 1$ independent of $z$ or $r$. Note that the space $(X=B_1(p),d)$ we obtain this way is {\it locally
  intrinsic}: for all $x \in X$, for all  $z,q \in B_{\ep}(x)$ for
  all $ B_{5\ep}(x) \subseteq B_{1-\al}(p)$ for all $1>\al,\ep>0$ there exists a length
  minimising geodesic between $z$ and $q$ which is contained in
  $B_{5\ep}(x)$: see the proof of Theorem 2.4.16 in \cite{BBI}.



\begin{thebibliography}{10}
\bibitem[AKP]{AKP}  Alexander, S., Kapovitch, V., Petrunin, A.,
  \emph{Alexandrov Geometry}, draft version available at https://www.math.psu.edu/petrunin/papers/alexandrov-geometry/



\bibitem[Aub]{Aub} Aubin, T., {\it Some Nonlinear Problems in
    Riemannian Geometry}

\bibitem[BGP]{BGP} Burago, Y., Gromov, M., Perelman, G.
\emph{  A.D. Alexandrov spaces with curvature bounded below},
Russian Math. Surveys 47:2 (1992), 1-58

\bibitem[BBI]{BBI} Burago,D.,  Burago, Y.,  Ivanov, S.
 \emph{A Course in Metric Geometry }, Graduate Studies in Mathematics,
  vol.33. A.M.S., Providence, RI, 2001.



\bibitem[CGT]{CGT} 
Cheeger,J.,   Gromov,M.,  and Taylor,M.
\emph{ Finite propagation speed, kernel estimates for functions of the Laplace operator, and the geometry of complete Riemannian manifolds}
Journal Differential Geom. Volume 17, Number 1 (1982), 15-53. 

\bibitem[Chen]{Chen}
Chen, B.-L.,
\emph{ Strong uniqueness of Ricci flow},
, Journal of Differential Geom. Volume 82, Number 2 (2009), 363-382. 



\bibitem[HaThree]{HaThree}
Hamilton,R.S.,
\emph{Three-manifolds with positive Ricci curvature},
Journal of Differential Geometry 17 (2): 255–306, (1982)

\bibitem[HaFour]{HaFour}
Hamilton,R.S.,
\emph{Four-manifolds with positive curvature operator},
Journal of Differential Geometry 24 : 153–179, (1986)

\bibitem[HaForm]{HaForm}
Hamilton,R.S.,
\emph{The formation of singularities in the Ricci flow},
{Collection: Surveys in differential geometry}, 
Vol. II (Cambridge, MA), 7-136, (1995).

\bibitem[LS]{LS}  Lang, U., Schroeder, V.
 \emph{On Toponogov's comparison theorem for Alexandrov spaces},
Enseign. Math. 59 (2013), 325-336 

\bibitem[Mey]{Mey} \emph{Toponogov's Theorem and Applications},
Lecture notes, available at http://wwwmath.uni-muenster.de/u/meyer/publications/publ.htm



\bibitem[Per]{Per}
Perelman,G.,
\emph{The entropy formula for the Ricci flow and its geometric applications},
MarthArxiv link: math.DG/0211159, 2002

\bibitem[SimSmoo]{SimSmoo} Simon, M. \emph{Local smoothing results for the Ricci
    flow in dimensions two and three },
Geometry and Topology 17 (2013) 2263–2287

\bibitem[SimThree]{SimThree} Simon, M. \emph{Ricci flow of almost non-negatively curved three manifolds},
 Journal f\"ur reine und angew. Math. \textbf{630}  (2009), 177-217


\bibitem[SimColl]{SimColl} Simon,M. \emph{Ricci flow of non-collapsed three manifolds
whose Ricci curvature is bounded from below},
 J. reine angew. Math. 662 (2012), 59—94


\bibitem[SimLoc]{SimLoc} Simon, M. \emph{Local results for flows whose speed or
    height is bounded by $c/t$}, Int Math Res Notices (2008) Vol. 2008. 

\bibitem[SimHab]{SimHab}Simon, M. \emph{ Ricci flow of almost non-negatively curved
    three manifolds}, Habiltation Thesis, University of Freiburg,
  Germany, Nov. 2006 : 
see http://www-ian.math.uni-magdeburg.de/~simon/



\end{thebibliography}

\end{document}